\newtheorem{prop}{Proposition}[section]
\newtheorem{thm}[prop]{Theorem}
\newtheorem{lem}[prop]{Lemma}
\newtheorem{cor}[prop]{Corollary}
\theoremstyle{definition}
\newtheorem{defin}[prop]{Definition}
\theoremstyle{remark}
\newtheorem{exa}[prop]{Example}
\newcommand\bI{\mathbf{I}}
\newcommand\Tstrut{\rule{0pt}{2.6ex}}
\newcommand\Bstrut{\rule[-0.9ex]{0pt}{0pt}}
\renewcommand\emptyset{\varnothing}
\newcommand\Em{\operatorname{Em}}
\newcommand\oc{\operatorname{oc}}
\newcommand\ul{\underline}
\newcommand{\Inv}{\mathbf{I}} 
\newcommand\W{W}
\newcommand\proj{\operatorname{proj}}
\newcommand\Des{\operatorname{Des}}
\newcommand\Asc{\operatorname{Asc}}
\begin{document}

\baselineskip=17pt

\title{Wilf equivalences between vincular patterns in inversion sequences}

\author{Juan S. Auli}
\address{Department of Mathematics\\
         Dartmouth College\\
         Hanover, NH 03755\\
         U.S.A.}
\email{juan.s.auli.gr@dartmouth.edu}

 \author{Sergi Elizalde}
 \address{Department of Mathematics\\
          Dartmouth College\\
          Hanover, NH 03755\\
          U.S.A.}
 \email{sergi.elizalde@dartmouth.edu}
 \urladdr{http://math.dartmouth.edu/~sergi}

\begin{abstract}

Inversion sequences are finite sequences of non-negative integers, where the value of each entry is bounded from above by its position.
Patterns in inversion sequences have been studied by Corteel--Martinez--Savage--Weselcouch and Mansour--Shattuck in the classical case, where patterns can occur in any positions, and by Auli--Elizalde in the consecutive case, where only adjacent entries can form an occurrence of a pattern. These papers classify classical and consecutive patterns of length 3 into Wilf equivalence classes according to the number of inversion sequences avoiding them.

In this paper we consider vincular patterns in inversion sequences, which, in analogy to Babson--Steingr\'{\i}msson patterns in permutations, require only certain entries of an occurrence to be adjacent, and thus generalize both classical and consecutive patterns. Solving a conjecture of Lin and Yan, we provide a complete classification of vincular patterns of length 3 in inversion sequences into Wilf equivalence classes, and into more restrictive classes that consider the number of occurrences of the pattern and the positions of such occurrences. We find the first known instance of patterns in inversion sequences where these two more restrictive classes do not coincide.

\end{abstract}

\keywords{Inversion sequence, pattern avoidance, vincular pattern, Wilf equivalence.}

\subjclass[2010]{Primary 05A05; Secondary 05A19.}


\maketitle

\section{Introduction}\label{sec:intro}

Let $S_n$ denote the set of permutations of $[n]=\{1,2,\dots,n\}$.
A permutation $\pi\in S_n$ can be encoded by the sequence $e_1e_2\dots e_n$, where $e_{i}=\left|\{j:j<i\textnormal{ and }\pi_{j}>\pi_{i}\}\right|$ is the number of inversions between the $i$th entry of $\pi$ and entries to its left.
This encoding provides a bijection between $S_n$ and the set of {\em inversion sequences}
$$\bI_n=\{e_1e_2\dots e_n: 0\leq e_{i}<i \text{ for all }i\}.$$

This bijection prompted Corteel, Martinez, Savage, and Weselcouch~\cite{MartinezSavageI}, as well as
Mansour and Shattuck~\cite{MansourShattuck}, to initiate the study of patterns in inversion sequences, with the goal of informing the study of patterns in permutations. Their enumeration of inversion sequences avoiding classical patterns of length 3 yielded interesting connections to well-known sequences, including Bell numbers, Fibonacci numbers, and Schr{\"o}der numbers. In addition, they classified classical patterns of length $3$ in inversion sequences according to the number of permutations of each length that avoid them.

The work in~\cite{MartinezSavageI,MansourShattuck}, together with the developing interest in consecutive patterns in permutations~\cite{Elizalde, ElizaldeNoy}, motivated the authors to begin an analogous study of consecutive patterns in inversion sequences~\cite{AuliElizalde}. Results in~\cite{AuliElizalde} include the enumeration of inversion sequences avoiding consecutive patterns of length 3, as well as the classification of consecutive patterns of length $3$ and $4$ into equivalence classes according to the number of inversion sequences avoiding them, and more generally, the number of those containing them a specific number of times or in specific positions.

In this paper we consider vincular patterns in inversion sequences, which provide a common generalization of classical and consecutive patterns studied in~\cite{MartinezSavageI,MansourShattuck} and~\cite{AuliElizalde}, respectively.
To introduce the notion of a vincular pattern, first define the {\em reduction} of a word $w=w_1w_2\dots w_k$ over the integers to be the word obtained by replacing all instances of the $i$th smallest entry of $w$ with $i-1$, for all $i$. For example, the reduction of $3253$ is $1021$.

\begin{defin} A {\em vincular pattern} is a sequence $p=p_{1}p_{2}\dots p_{r}$ where some disjoint subsequences of two or more adjacent entries may be underlined, satisfying $p_{i}\in\left\{0,1,\ldots,r-1\right\}$ for each $i$, where any value $j>0$ can only appear in $p$ only if $j-1$ appears as well.

An inversion sequence $e$ {\em contains} the vincular pattern
$p$ if there is a subsequence $e_{i_{1}}e_{i_{2}}\dots e_{i_{r}}$ of $e$ whose reduction is $p$, and such that $i_{s+1}=i_{s}+1$ whenever $p_{i_{s}}$ and $p_{i_{s+1}}$ are part of the same underlined subsequence.
In such case, the subsequence $e_{i_{1}}e_{i_{2}}\dots e_{i_{r}}$ is called an {\em occurrence} of $p$ in positions $\{i_{1},i_{2},\ldots,i_{r}\}$.
Denote by $\oc(p,e)$ the number of occurrences of $p$ in $e$, and let
$$\bI_n(p,m)=\{e\in\bI_n:\oc(p,e)=m\}.$$

If $\oc(p,e)=0$, then we say that $e$ {\em avoids} $p$.
We use the simpler notation $\bI_n(p)$ for the set $\bI_n(p,0)$ of inversion sequences that avoid $p$.
\end{defin}

In an occurrence of a vincular pattern, underlined subsequences are required to be in adjacent positions.
A vincular pattern $p=p_{1}p_{2}\dots p_{r}$ where no entries are underlined is a {\em classical pattern}; whereas a vincular pattern of the form $p=\ul{p_{1}p_{2}\dots p_{r}}$ is a {\em consecutive pattern}.
In analogy to vincular permutation patterns, introduced by Babson and Steingr\'{\i}msson~\cite{Babson,steingrimsson2010} (who called them {\em generalized patterns}), vincular patterns in inversion sequences generalize both classical and consecutive patterns.

\begin{exa}\label{exa:vinc}
The inversion sequence $e=0013204\in\bI_{7}$ avoids the classical pattern $201$, the consecutive pattern $\underline{000}$, and
the vincular pattern $\underline{01}1$, but it contains the classical pattern $010$, the consecutive pattern $\underline{021}$, and the vincular pattern $\underline{00}0$. For example, $e_{2}e_{3}e_{6}$ is an occurrence of $010$, $e_{3}e_{4}e_{5}$ is an occurrence of $\underline{021}$, and $e_{1}e_{2}e_{6}$ is an occurrence of $\underline{00}0$. One can check that $\oc(012,e)=12$, $\oc(\ul{01}2,e)=4$, $\oc(0\ul{12},e)=2$, and $\oc(\ul{012},e)=1$.
\end{exa}

Unlike patterns in permutations (see~\cite[Ch.~4]{BonaBook} or~\cite[Ch.~1]{KitaevBook} for the basic definitions), patterns in inversion sequences may have repeated entries. Henceforth, the word {\em patterns} will refer to vincular patterns in inversion sequences, unless otherwise stated.

It will be convenient to draw inversion sequences $e=e_1e_2\dots e_n$ as underdiagonal lattice paths on the plane, from the origin to the line $x=n$, consisting of unit vertical steps $(0,1)$ and $(0,-1)$, and unit horizontal steps $(1,0)$. Each entry $e_{i}$ is represented by a horizontal step between the points $(i-1,e_{i})$ and $(i,e_{i})$. The necessary vertical steps are then inserted to make the path connected, see Figure~\ref{fig:path_represent} for an example.

\begin{figure}[htb]
	\begin{center}
	\includegraphics[scale=0.6]{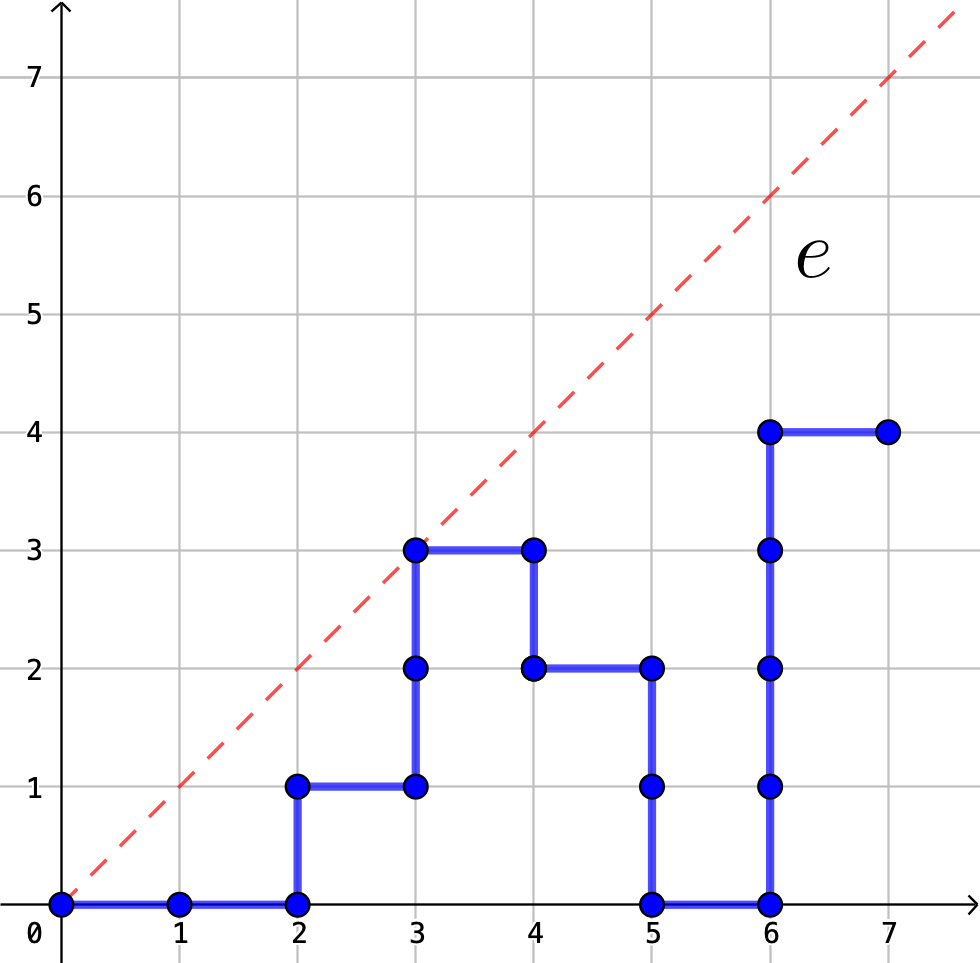}
		\par\end{center}

	\protect\caption{Visualization of $e=0013204\in\bI_{7}$ as a lattice path.\label{fig:path_represent}}
\end{figure}

Next we extend the notion of Wilf equivalence from~\cite{AuliElizalde,MartinezSavageI,MansourShattuck} to vincular patterns. Two patterns are in the same Wilf equivalence class if they are avoided by the same number of inversion sequences of each length. We also introduce more restrictive equivalence relations that also consider the number of occurrences of the patterns and the positions of such occurrences.

\begin{defin}\label{def:Wilf_vinc} Let $p$ and $p'$ be vincular patterns. We say that $p$ and $p'$ are
\begin{itemize}
\item
{\em Wilf equivalent\/}, denoted by $p\sim p'$, if
$\left|\bI_{n}\left(p\right)\right|=\left|\bI_{n}\left(p'\right)\right|$,
for all $n$;
\item
{\em strongly Wilf equivalent\/}, denoted by $p\stackrel{s}{\sim}p'$,
if $\left|\bI_{n}\left(p,m\right)\right|=\left|\bI_{n}\left(p',m\right)\right|$, for all $n$ and $m$.
\end{itemize}
\end{defin}

Denote by $\binom{[n]}{r}$ the set of $r$-element subsets of $[n]$. We use the subscript $<$ on a set to indicate that the elements of a set are listed in increasing order.

Given $n\geq 0$, a vincular pattern $p$ of length $r$, and a set $S\subseteq\binom{[n]}{r}$, we define
\begin{equation}\label{eq:def_super_strongly}
\bI_{n}(p,S) = \left\{e\in\bI_{n}: e_{i_{1}}e_{i_{2}}\ldots e_{i_{r}} \textnormal{ is an occurrence of } p \textnormal{ if and only if } \{i_{1},i_{2},\ldots, i_{r}\}_<\in S\right\}.
\end{equation}
In other words, $\bI_{n}(p,S)$ is the set of inversion sequences of length $n$ whose occurrences of $p$ are indexed by elements of $S$. In particular, $\bI_{n}(p,\emptyset)=\bI_{n}(p)$.

\begin{exa}
There are exactly 6 inversion sequences of length 6 whose occurrences of $1\underline{01}$ are in positions $S=\{\{2,4,5\},\{3,4,5\}\}$. Namely,
\[
\bI_{6}\left(1\underline{01},S\right) = \left\{{011010}, {011011}, {011012}, {011013}, {011014}, {011015}\right\}.
\]
\end{exa}

\begin{defin}\label{def:ssWilf_vinc}
Let $p$ and $p'$ be vincular patterns of length $r$. We say that $p$ and $p'$ are {\em super-strongly Wilf equivalent}, denoted  by
$p\stackrel{ss}{\sim}p'$, if $\left|\bI_{n}(p,S)\right|=\left|\bI_{n}(p',S)\right|$, for all $n$ and all $S\subseteq\binom{[n]}{r}$.
\end{defin}

We use the term {\em generalized Wilf equivalence} to refer to an equivalence of any one of the three types from Definitions~\ref{def:Wilf_vinc} and~\ref{def:ssWilf_vinc}. These three notions of equivalence between vincular patterns extend those defined by the authors for consecutive patterns~\cite{AuliElizalde}. As suggested by their names, $p\stackrel{ss}{\sim}p'$ implies $p\stackrel{s}{\sim}p'$, which in turn implies $p\sim p'$.

\section{Summary of Results}\label{sec:summary_results_vincular}

The main goal of this paper is to describe all generalized Wilf equivalences between vincular patterns of length 3. Equivalences between classical patterns were described in~\cite{MartinezSavageI}, whereas equivalences between consecutive patterns appear in~\cite{AuliElizalde}. The next theorem gives a complete list of generalized Wilf equivalences between vincular patterns that are neither classical nor consecutive.
Such patterns will be called {\em hybrid vincular patterns}.

\begin{thm}\label{EquivVinc}  A complete list of generalized Wilf equivalences between hybrid vincular patterns of length 3 is as follows: \vspace{-6pt}
\begin{multicols}{2}
 \begin{enumerate}[label=(\roman*),itemsep=1ex,leftmargin=2cm]
 \item $\protect\underline{01}0 \sim\protect\underline{01}1$.

 \item $\protect\underline{10}0 \sim\protect\underline{10}1$.

 \item $1\protect\underline{01}\stackrel{s}{\sim} 1\protect\underline{10}$.

 \item $2\underline{01}\sim 2\underline{10}$.
 \end{enumerate}
\end{multicols}
\end{thm}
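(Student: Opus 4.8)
The plan is to split the proof into two parts: verifying that the four displayed equivalences hold (and that none of them is stronger than stated), and verifying that no further generalized Wilf equivalence occurs among the $26$ hybrid vincular patterns of length~$3$. The latter is a finite check: for each hybrid pattern $p$ one computes the sequence $(|\bI_n(p)|)_{n\le N}$ for a modest $N$ and observes that any two patterns not grouped together in (i)--(iv) are already separated by this data. The same computation, refined to the statistics $|\bI_n(p,m)|$ and $|\bI_n(p,S)|$ for small $n$, is then used to confirm that (i), (ii) and (iv) are genuine (non-strong) Wilf equivalences while (iii) is a strong but not super-strong equivalence; these observations pin down the precise symbol appearing in each item.

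For (i), (ii) and (iv) it suffices to count avoiders, so I would first describe $\bI_n(p)$ structurally in terms of ascents, descents and running maxima. For instance, $e\in\bI_n$ avoids $2\underline{01}$ exactly when $e_{j+1}\ge\max(e_1,\dots,e_{j-1})$ at every ascent position $j$ (every ascent top dominates all earlier entries), whereas $e$ avoids $2\underline{10}$ exactly when $e_j\ge\max(e_1,\dots,e_{j-1})$ at every descent position $j$; the patterns in (i) and (ii) admit analogous descriptions governing where a repeated value may occur relative to an ascent (for (i)) or a descent (for (ii)). From such descriptions the equivalences should follow either by an explicit bijection $\bI_n\to\bI_n$ that reflects the relevant monotone segments --- turning those ascents into descents, or relocating the forced repetitions --- or, failing a clean bijection, by showing that, for the two patterns in question, $\sum_n|\bI_n(p)|\,x^n$ satisfies the same functional equation, with the last entry and the running maximum serving as catalytic parameters. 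I expect (iv) in particular to parallel the classical equivalence $201\sim210$ from~\cite{MartinezSavageI}, adapted to respect the adjacency of the underlined pair. A secondary difficulty is that the avoidance conditions for (i) and (ii) are non-local, forbidding a value from recurring arbitrarily far to the right, so any recursion must carry enough information to detect such recurrences.

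The core of the theorem is item (iii). Unwinding the definitions gives
\[
\oc(1\underline{01},e)=\sum_{j\,:\,e_{j-1}<e_j}r_j(e),\qquad
\oc(1\underline{10},e)=\sum_{j\,:\,e_j>e_{j+1}}r_j(e),
\]
where $r_j(e)=\#\{i<j:\,e_i=e_j\}$ counts the earlier copies of the value $e_j$; thus the first statistic sums $r_j$ over ascent tops and the second sums $r_j$ over descent tops. The goal is an explicit bijection $\phi\colon\bI_n\to\bI_n$ carrying the first sum to the second, built by traversing the maximal weakly monotone runs of $e$ and reflecting ascents into descents while preserving, value by value, the pattern of multiplicities responsible for the numbers $r_j$. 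The hard part --- and the main obstacle in the whole argument --- is to check simultaneously that $\phi(e)$ still satisfies $0\le e'_i\le i-1$ (the feature distinguishing inversion sequences from arbitrary words, which rules out the most naive reflections) and that $\phi$ sends occurrences of $1\underline{01}$ to occurrences of $1\underline{10}$ one at a time rather than merely preserving their total number. Since $\phi$ will necessarily relocate occurrences to different position sets, a single small example will then show that $1\underline{01}$ and $1\underline{10}$ are \emph{not} super-strongly Wilf equivalent, which is exactly the phenomenon highlighted in the abstract.
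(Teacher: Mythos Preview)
Your overall plan matches the paper's, and for (iv) the paper carries out precisely the bijection you sketch: reversing each maximal block of entries lying strictly between consecutive weak left-to-right maxima of $e$ swaps the two avoidance conditions you wrote down for $2\underline{01}$ and $2\underline{10}$. For (i) and (ii) the paper does not attempt a bijection but instead exhibits isomorphic generating trees for the two avoidance classes, labelling each $e$ by the numbers of active sites above and below $e_n$; the succession rules turn out to coincide, so the Wilf equivalence follows directly. This is close to your ``catalytic parameters'' fallback but bypasses any functional-equation analysis.

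Where your outline has a real gap is (iii). You aim for a single bijection $\phi\colon\bI_n\to\bI_n$ satisfying $\oc(1\underline{01},e)=\oc(1\underline{10},\phi(e))$, built by reflecting monotone runs. You correctly identify the inversion-sequence constraint $\phi(e)_i<i$ as the obstacle, but offer no mechanism to enforce it, and a naive run-reflection does not. The paper avoids this difficulty altogether: rather than a bijection on $\bI_n$, it builds, for each $k$, a bijection on \emph{pairs} $(e,M)$ with $M$ a $k$-element subset of the occurrences of $1\underline{01}$ in $e$, sending them to the analogous pairs for $1\underline{10}$. Concretely, one lets $S$ be the set of middle positions of the elements of $M$, reverses the entries of $e$ on each maximal run of $S\cup(S{+}1)$, and transports $M$ along. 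Because the reversal depends on $M$, each reversed block is dominated by an earlier entry (the ``$1$'' of some marked occurrence), which makes the check $e'_i<i$ immediate. The equality of these marked-occurrence counts for all $k$ is then converted into $|\bI_n(1\underline{01},m)|=|\bI_n(1\underline{10},m)|$ for all $m$ by a standard inclusion--exclusion inversion. This marked-occurrence/inclusion--exclusion device is the missing idea in your plan; without it, item (iii) remains a hope rather than a proof.
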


An independent proof of Theorem~\ref{EquivVinc}(i) has recently been given by Lin and Yan~\cite{Scooped}\footnote{Lin and Yan's work~\cite{Scooped} appeared online while this paper was being written up.}. In the same paper, they also conjecture the Wilf equivalence $2\underline{01}\sim 2\underline{10}$, corresponding to our Theorem~\ref{EquivVinc}(iv).

There are 26 hybrid vincular patterns of length 3, which fall into 22 Wilf equivalence classes and 25 strong Wilf equivalence classes. This is in  contrast with the case of hybrid vincular permutation patterns of length 3, where the 12 patterns fall into 2 Wilf equivalence classes, as shown by Claesson~\cite{Claesson}, and 5 strong Wilf equivalence classes (with all equivalences arising from trivial symmetries).

Corteel et al.\ \cite{MartinezSavageI} prove that the only Wilf equivalences between classical patterns of length 3 are $201\sim 210$ and $101\sim 110$, whereas the authors~\cite{AuliElizalde} show that the only Wilf equivalent consecutive patterns of length 3 are $\underline{100}\stackrel{ss}{\sim}\underline{110}$.

In addition to the above equivalences, there are also some Wilf equivalences between hybrid vincular patterns and classical patterns that follow from known results. Corteel et al.\ \cite{MartinezSavageI} proved that $\left|\bI_{n}\left(001\right)\right|=2^{n-1}$, $\left|\bI_{n}\left(011\right)\right|=B_n$ (the $n$th Bell number), and $\left|\bI_{n}\left(101\right)\right|=\left|\bI_{n}\left(110\right)\right|=\left|S_{n}\left(1\underline{23}4\right)\right|$, which denotes the number of permutations avoiding the vincular permutation pattern $1\underline{23}4$.
On the other hand, Lin and Yan~\cite{Scooped} show that $\left|\bI_{n}\left(0\underline{01}\right)\right|=2^{n-1}$, $\left|\bI_{n}\left(0\underline{12}\right)\right|=B_n$, and $\left|\bI_{n}\left(\underline{12}0\right)\right|=\left|S_{n}\left(1\underline{23}4\right)\right|$. Therefore, we have the Wilf equivalences
$$001\sim 0\underline{01},\qquad 011\sim 0\underline{12},\qquad 101\sim 110\sim \underline{12}0.$$
The first equivalence generalizes, in fact, to the equality $\bI_{n}\left(001\right)=\bI_{n}\left(0\underline{01}\right)$.

Brute force computations for small values of~$n$ show that there are no more generalized Wilf equivalences between vincular patterns other that the ones mentioned above. Thus, Theorem~\ref{EquivVinc} completes the classification of all vincular patterns of length 3 into generalized Wilf equivalence classes of each type. We summarize all these equivalences in Table~\ref{tab:vinc_class}. In total, there are 52 vincular patterns of length 3: 13 consecutive, 13 classical, and 26 hybrid. These patterns fall into 42 Wilf equivalence classes, 50 strong Wilf equivalent classes and 51 super-strong Wilf equivalence classes.

\begin{table}[h]
\footnotesize
\begin{centering}
\resizebox{\columnwidth}{!}{%
 \begin{tabular}{ c@{\hskip 0.3cm}l@{\hskip 0.3cm}c@{\hskip 0.3cm}l}
 \hline
 Pattern $p$ & $\left|\bI_{n}(p)\right|$ counted by & OEIS~\cite{OEIS} & $\left|\bI_{n}(p)\right|$ for $1\le n\le 10$
 \Tstrut\Bstrut\\
 \hline
  $001\sim 0\underline{01}$ & $2^{n-1}$ & A000079 & $1,2,4,8,16,32,64,128,256,512$ \Tstrut\\

  $011\sim 0\underline{12}$ & Bell numbers & A000110 & $1,2,5,15,52,203,877,4140,21147,115975$ \Tstrut\\

 $\underline{01}0\sim \underline{01}1$ & Fishburn numbers & A022493 & $1,2,5,15,53,217,1014,5335,31240,201608$ \Tstrut\\

 $101\sim 110\sim \underline{12}0$ & $\left|S_{n}\left(1\underline{23}4\right)\right|$ & A113227 & $1,2,6,23,105,549,3207,20577,143239,1071704$ \Tstrut\\

 $\underline{10}0\sim \underline{10}1$ & (see Proposition~\ref{prop:GF}) & New & $1,2,6,23,106,567,3440,23286,173704,1414102$ \Tstrut\\

 $1\underline{01}\stackrel{s}{\sim} 1\underline{10}$ & ? & New & $1,2,6,23,107,584,3655,25790,202495,1750763$ \Tstrut\\

$201\sim 210$ & reccurrence~\cite[Eq.\ (5)]{MartinezSavageI} & A263777 & $1,2,6,24,118,674,4306,29990,223668,1763468$ \Tstrut\\

 $2\underline{01}\sim 2\underline{10}$ & ? & New & $1,2,6,24,118,680,4460,32634,262536,2296532$ \Tstrut\\

 $\underline{100}\stackrel{ss}\sim \underline{110}$ & reccurrence~\cite[Prop.\ 3.4]{AuliElizalde} & A328441 & $1,2,6,23,109,618,4098,31173,267809,2565520$ \Tstrut\Bstrut\\
 \hline
 \end{tabular}
 }
 \end{centering}
\caption[Complete list of generalized Wilf equivalences between vincular patterns of length 3]{Complete list of generalized Wilf equivalences between vincular patterns of length~3. The patterns are listed from least avoided to most avoided in inversion sequences of length~$10$.}\label{tab:vinc_class}
\end{table}

A consequence of Theorem~\ref{EquivVinc} and Table~\ref{tab:vinc_class} is that $1\underline{01}$ and $1\underline{10}$ are the only nonconsecutive vincular patterns of length 3 that are strongly Wilf equivalent. The existence of such a pair is somewhat surprising, given the exacting requirement for  nonconsecutive vincular patterns to be strongly Wilf equivalent.

Even more striking is the fact that $1\underline{01}$ and $1\underline{10}$ are strongly Wilf equivalent but not super-strongly Wilf equivalent, making  these patterns the only known instance of vincular patterns in inversion sequences with this property.
Compare this to the fact, shown in~\cite{AuliElizalde}, that for consecutive patterns of length up to 4, strong Wilf equivalence and super-strong Wilf equivalence classes coincide. In fact, it is shown in~\cite{AuliElizaldeII} that this coincidence extends to so-called consecutive patterns of relations of length~3.

Our second result provides a generalization of Theorem~\ref{EquivVinc}(iv) to patterns of arbitrary length.

\begin{thm}\label{thm:gen_2-01_2-10} Let $p=\underline{p_{1}p_{2}\ldots p_{r}}$ be a consecutive pattern and let $d=\max_{i}\left\{p_{i}\right\}$. Then the vincular patterns $(d{+}1)\underline{p_{1}p_{2}\ldots p_{r}}$ and $(d{+}1)\underline{p_{r}p_{r-1}\ldots p_{1}}$ are Wilf equivalent.
\end{thm}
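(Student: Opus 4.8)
The plan is to construct, for each $n$, a bijection $\Phi\colon \bI_n\bigl((d{+}1)\underline{p_1p_2\ldots p_r}\bigr)\to \bI_n\bigl((d{+}1)\underline{p_rp_{r-1}\ldots p_1}\bigr)$. First I would isolate where occurrences of these patterns can live inside an inversion sequence $e=e_1e_2\dots e_n$. Since $d=\max_i p_i$ and the leading entry of the pattern is $d{+}1$, an occurrence of $(d{+}1)\underline{p_1\dots p_r}$ is a position $j$ together with $r$ consecutive positions $k, k+1,\dots, k+r-1$ with $j<k$, such that $e_k e_{k+1}\cdots e_{k+r-1}$ reduces to $\underline{p_1\ldots p_r}$ and $e_j$ is strictly larger than all of $e_k,\dots,e_{k+r-1}$. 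The key structural observation is that the underlined block $e_ke_{k+1}\cdots e_{k+r-1}$ is an occurrence of the \emph{consecutive} pattern $\underline{p_1\ldots p_r}$ in $e$, and whether such a block extends to an occurrence of the vincular pattern depends only on whether some earlier entry exceeds $\max\{e_k,\dots,e_{k+r-1}\}$. Thus the set of positions of vincular occurrences is completely determined by (a) the set of positions of consecutive occurrences of $\underline{p_1\ldots p_r}$, together with their maxima, and (b) the running maximum (prefix maximum) sequence of $e$.

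Next I would exploit the bijection on consecutive patterns already available. By the authors' earlier work, $\underline{p_1\ldots p_r}$ is super-strongly Wilf equivalent to its reverse $\underline{p_r\ldots p_1}$ — in fact this is exactly the type of statement in~\cite{AuliElizalde}, and more to the point the natural ``reverse each maximal consecutive block'' type operation preserves the values appearing in the block (in particular its maximum) and its position set. So I would build $\Phi$ to act locally: decompose $e$ into maximal runs of consecutive $\underline{p_1\ldots p_r}$-occurrences (overlapping occurrences merge into a single block that gets reversed as a unit, exactly as in the consecutive case), and on each such block apply the block reversal, leaving all other entries fixed. One must check that this map indeed lands in $\bI_n$ (the underdiagonal condition $e_i<i$): reversing a consecutive block $e_k\cdots e_{k+\ell}$ could in principle violate the bound, but the same verification needed to show $\underline{p_1\ldots p_r}\stackrel{ss}{\sim}\underline{p_r\ldots p_1}$ handles this, because the reversal used there is constructed precisely to respect $\bI_n$. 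Crucially, because the entries inside each reversed block are merely permuted, the multiset of values in every block is unchanged, hence the block maxima are unchanged; and because no entry \emph{outside} the blocks moves, the prefix-maximum sequence at every position outside a block is unchanged as well. Within a reversed block the prefix maxima may change, but no vincular occurrence of our pattern uses a position strictly inside such a block as its leading ``$(d{+}1)$'': the leading entry must exceed the max of the trailing consecutive block, and inside a maximal run of consecutive occurrences no entry can play that role relative to the run it belongs to. Putting this together, $\Phi$ sends inversion sequences avoiding $(d{+}1)\underline{p_1\ldots p_r}$ to inversion sequences avoiding $(d{+}1)\underline{p_r\ldots p_1}$, and it is invertible (its inverse is the analogous block-reversal for the reversed pattern), giving $\left|\bI_n\bigl((d{+}1)\underline{p_1\ldots p_r}\bigr)\right|=\left|\bI_n\bigl((d{+}1)\underline{p_r\ldots p_1}\bigr)\right|$.

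The main obstacle I anticipate is the bookkeeping around \emph{overlapping} consecutive occurrences and the interface between a reversed block and the leading ``tall'' entry. Two consecutive occurrences of $\underline{p_1\ldots p_r}$ can share up to $r-1$ positions, so ``maximal run'' must be defined carefully and the reversal on a run of length $\ell>r$ must be shown to still have its occurrence-position set and its value-multiset preserved — this is precisely the combinatorial heart of the consecutive-pattern result $\underline{p_1\ldots p_r}\stackrel{ss}{\sim}\underline{p_r\ldots p_1}$, which I would invoke rather than reprove, but I need to confirm that the \emph{particular} bijection realizing it has the two properties I rely on (preserves the multiset of values in each block, and fixes all positions outside blocks). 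If the bijection from~\cite{AuliElizalde} is stated only abstractly, I would instead give the explicit block-reversal map here and verify directly that (i) it is a well-defined involution-like bijection on inversion sequences, (ii) it preserves position sets of consecutive occurrences, and (iii) it preserves all block maxima and all prefix maxima outside blocks; properties (ii) and (iii) then immediately upgrade it to a bijection for the vincular patterns $(d{+}1)\underline{p_1\ldots p_r}$ and $(d{+}1)\underline{p_r\ldots p_1}$. A secondary subtlety is to make sure the condition ``$d{+}1$ is exactly one more than the block max'' is used correctly: it guarantees that the leading entry of a vincular occurrence is \emph{strictly} above the entire block, so that reversing the block (which permutes its entries) neither creates nor destroys the ``tall enough'' relation with any fixed earlier entry.
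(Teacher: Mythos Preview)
Your proposal rests on a lemma that is false: consecutive patterns in inversion sequences are \emph{not} in general Wilf equivalent (let alone super-strongly Wilf equivalent) to their reverses. The constraint $e_i<i$ is not symmetric under reversal, and indeed already $\left|\bI_2(\underline{01})\right|=1\neq 2=\left|\bI_2(\underline{10})\right|$; the paper itself records that the \emph{only} length-$3$ consecutive equivalence is $\underline{100}\stackrel{ss}{\sim}\underline{110}$, which is not a reversal pair. So there is no result in~\cite{AuliElizalde} of the form $\underline{p_1\ldots p_r}\stackrel{ss}{\sim}\underline{p_r\ldots p_1}$ to invoke.

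Relatedly, your block-reversal map need not land in $\bI_n$. Take $p=01$ and $e=01010\in\bI_5$. The maximal $\underline{01}$-blocks are positions $\{1,2\}$ and $\{3,4\}$; reversing them yields $10100$, which violates $e_1<1$. The verification you defer to the consecutive case does not exist, and this example shows it cannot. Even setting this aside, your map fixes every position outside an $\underline{p_1\ldots p_r}$-block, so any pre-existing occurrence of $\underline{p_r\ldots p_1}$ in $e$ (with a tall earlier entry) survives unchanged in $\Phi(e)$, and there is no reason $e\in\bI_n\bigl((d{+}1)\underline{p_1\ldots p_r}\bigr)$ prevents that.

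The paper's argument avoids all of this by using a \emph{single} involution $\varphi$ on $\bI_n$, independent of $p$: it reverses each block of entries lying strictly between consecutive weak left-to-right maxima. The reason $\varphi$ preserves $\bI_n$ is that every entry in such a block is at most the preceding weak left-to-right maximum $e_{w_l}<w_l$, which is smaller than every position in the block; and the reason $\varphi$ works for the vincular patterns is that in any occurrence of $(d{+}1)\underline{p_1\ldots p_r}$ the consecutive part lies strictly below an earlier entry, hence entirely inside one such block, so reversing the block turns it into an occurrence of $(d{+}1)\underline{p_r\ldots p_1}$. The decomposition you want is by weak left-to-right maxima, not by pattern occurrences.
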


Upcoming work of the first author~\cite{AuliThesis} provides the enumeration of $\left|\bI_{n}\left(p\right)\right|$ for some hybrid vincular patterns $p$ of length~3, proving that, in some cases, the sequence $\left|\bI_{n}\left(p\right)\right|$ also counts other well-known combinatorial structures.
These results,  summarized in Table~\ref{tab:vinc_pattern}, have been obtained independently by Lin and Yan~\cite{Scooped}.

\begin{table}[htb]
\footnotesize
\begin{centering}
\resizebox{\columnwidth}{!}{%
 \begin{tabular}{c@{\hskip 0.3cm}l@{\hskip 0.3cm}c@{\hskip 0.3cm}l }
 \hline
 Pattern $p$ & $\left|\bI_{n}(p)\right|$ counted by & OEIS~\cite{OEIS} & $\left|\bI_{n}\left(p\right)\right|$ for $1\le n\le 10$
 \Tstrut\Bstrut\\
 \hline
   $0\underline{01}$ & $2^{n-1}$  & A000079 & $1,2,4,8,16,32,64,128,256,512$ \Tstrut\\
 $0\underline{12}$ & Bell numbers & A000110 & $1,2,5,15,52,203,877,4140,21147,115975$ \Tstrut\\
$\underline{01}0 \sim\underline{01}1$  & Fishburn numbers & A022493& $1,2,5,15,53,217,1014,5335,31240,201608$\Tstrut\\
 $0\underline{21}$  & certain Bell-like numbers & A091768
 & $1,2,6,22,92,426,2150,11708,68282,423948$ \Tstrut\\
   $\underline{02}1$  & $\left|S_{n}\left(2\underline{41}3\right)\right|$ (semi-Baxter) & A117106& $1,2,6,23,104,530,2958,17734,112657,750726$ \Tstrut\\
  $\underline{12}0$  & $\left|S_{n}\left(1\underline{23}4\right)\right|$ & A113227& $1,2,6,23,105,549,3207,20577,143239,1071704$ \Tstrut\\
  $0\underline{11}$  & recurrence $a_{n+1} = n\,a_{n} + a_{n-1}$
 & A102038 & $1,2,5,17,73,382,2365,16937,137861,1257686$ \Tstrut\Bstrut\\
 \hline
 \end{tabular}
 }
 \end{centering}
 \caption[Hybrid vincular patterns $p$ for which $\left|\bI_{n}\left(p\right)\right|$ appears in the OEIS~\cite{OEIS}]{Hybrid vincular patterns $p$ of length 3 for which $\left|\bI_{n}\left(p\right)\right|$ appears in the OEIS~\cite{OEIS}. The patterns are listed from least avoided to most avoided in inversion sequences of length~$10$.}\label{tab:vinc_pattern}
\end{table}

The rest of the paper is devoted to the proofs of Theorems~\ref{EquivVinc} and~\ref{thm:gen_2-01_2-10}. Our methods include bijections, sieve methods, and the use of generating trees. In the process, we also derive a functional equation satisfied by the generating function of the sequence $\left|\bI_{n}\left(\ul{10}0\right)\right|$.

\section{Proofs of Wilf Equivalences}\label{sec:wilf_equivalences}

\subsection{The patterns $2\protect\underline{01}$ and $2\protect\underline{10}$}\label{subsec:2_01_and_2_10}
In this subsection we prove Theorem~\ref{EquivVinc}(iv), which was conjectured by Lin and Yan~\cite{Scooped}. Our proof is bijective, and it uses the following notion. We say that a position $j$ is a {\em weak left-to-right maximum} of an inversion sequence $e$ if $e_{i}\leq e_{j}$ for all $i<j$. We denote the set of weak left-to-right maxima of $e$ by $\W(e)$. Note that $1\in \W(e)$ for every nonempty inversion sequence $e$.

\begin{exa} The set of weak left-to-right maxima of $e=001210031012$ is $\W(e)=\left\{1,2,3,4,8\right\}$, see Figure~\ref{fig:2-01_2-10}(left).
\end{exa}

\begin{prop}\label{prop:2_01_and_2_10} The patterns $2\underline{01}$ and $2\underline{10}$ are Wilf equivalent.
\end{prop}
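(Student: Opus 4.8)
The plan is to construct an explicit bijection $\Phi\colon\bI_n(2\underline{01})\to\bI_n(2\underline{10})$, using the weak left-to-right maxima introduced above as the structural backbone. The key observation is that an occurrence of either pattern $2\underline{01}$ or $2\underline{10}$ consists of a ``large'' entry $e_i$ sitting to the left, followed later by two \emph{adjacent} entries $e_j e_{j+1}$ with $e_i > e_{j+1}$ (resp.\ $e_i > e_j$) and $e_j < e_{j+1}$ (resp.\ $e_j > e_{j+1}$). In both cases the constraint on the left entry $e_i$ only depends on whether some entry strictly to the left of position $j$ exceeds $\min(e_j,e_{j+1})$; equivalently, on whether $\min(e_j,e_{j+1})$ is smaller than the running maximum of the prefix $e_1\dots e_{j-1}$. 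So avoidance of $2\underline{01}$ (resp.\ $2\underline{10}$) is a local condition at each descent-free (resp.\ ascent-free) adjacent pair, phrased in terms of the prefix maximum, and $\W(e)$ records precisely where that prefix maximum is attained.

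Concretely, I would first show that $e$ avoids $2\underline{01}$ if and only if for every $j$ with $e_j < e_{j+1}$, we have $e_i \le e_{j+1}$ for all $i<j$ with $i$ not forced to be small --- more cleanly, iff every ascent $e_je_{j+1}$ satisfies that all earlier entries are $\le e_{j+1}$, which (since the earlier entries include a weak left-to-right maximum achieving the prefix max) is equivalent to: the prefix maximum of $e_1\dots e_j$ equals $e_{j+1}$ or is $\le e_{j+1}$, i.e.\ $j+1\in\W(e)$ or the ascent lies weakly below the current maximum in a controlled way. Dually, $e$ avoids $2\underline{10}$ iff every \emph{descent} $e_je_{j+1}$ has all earlier entries $\le e_j$, i.e.\ $j\in\W(e)$. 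The bijection $\Phi$ then acts by reading $e$ left to right, keeping the skeleton of weak left-to-right maxima fixed (note $\W$ is preserved because the prefix maxima are preserved), and between consecutive weak left-to-right maxima, where the entries are all strictly below the current running maximum $M$, it reverses or otherwise rearranges each maximal block to swap ascents for descents while respecting the bound ``$< M$'' and the inversion-sequence constraint $e_k < k$. The last point is automatic: entries in such a block are all $\le M - 1 < $ the position of the relevant weak left-to-right maximum $\le k$, so rearranging them never violates $e_k<k$.

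I would organize the proof as: (1) the avoidance characterizations above in terms of $\W(e)$ and prefix maxima; (2) the definition of $\Phi$ block by block; (3) verification that $\Phi(e)$ avoids $2\underline{10}$ when $e$ avoids $2\underline{01}$, and that the analogous inverse map lands in $\bI_n(2\underline{01})$; (4) checking $\Phi^{-1}\circ\Phi=\mathrm{id}$, which follows since both maps fix $\W$ and act by a fixed involution on each between-maxima block. A figure (the referenced Figure~\ref{fig:2-01_2-10}) illustrating $\W(e)$ and the block reversal will make the construction transparent.

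The main obstacle I anticipate is that simply ``reversing each block'' between weak left-to-right maxima does not quite work: reversing a block can create a new ascent/descent at the \emph{boundary} with the adjacent weak left-to-right maximum, and it can turn an interior configuration that was fine into a forbidden one because the relevant ``large left entry'' of a pattern occurrence need not be a weak left-to-right maximum --- it only needs to exceed $\min(e_j,e_{j+1})$. So the delicate part is choosing the right local rearrangement (likely: within each block, sort in a way that is symmetric under the $01\leftrightarrow 10$ swap, and treat the entry equal to $M$, if present, and the block endpoints with care), and then proving the boundary pairs $e_{w}e_{w+1}$ at a weak left-to-right maximum $w$ contribute occurrences of $2\underline{01}$ exactly as often as of $2\underline{10}$ after the map. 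Getting these boundary and endpoint cases to match up cleanly, so that $\Phi$ is a genuine involution-induced bijection rather than merely a surjection, is where the real work lies.
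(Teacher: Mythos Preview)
Your approach is essentially the paper's: the map $\varphi$ fixes the positions in $\W(e)$ and simply reverses each block of entries strictly between consecutive weak left-to-right maxima (and the final block after $w_t$). Your anticipated obstacle, however, is not real---plain block reversal works without any refinement, and $\varphi$ is already an involution on all of $\bI_n$.

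The point you are missing is this. If $e_je_ie_{i+1}$ is an occurrence of $2\underline{01}$ in $e$ (so $e_j>e_{i+1}>e_i$ with $j<i$), then neither $i$ nor $i+1$ is a weak left-to-right maximum, since $e_j$ strictly exceeds both entries from the left. Hence the adjacent pair $(i,i+1)$ lies \emph{strictly inside} some block $\{w_l+1,\dots,w_{l+1}-1\}$, and reversal sends it to an adjacent pair in $e'$ with the two values swapped. As for the ``large left entry'', you do not need to track $e_j$ through the map: the entry $e_{w_l}=e'_{w_l}$ at the left boundary satisfies $e_{w_l}\ge e_j>e_{i+1}>e_i$ (because $e_{w_l}$ is exactly the prefix maximum, as you observed), so it always serves as the ``$2$'' witness for $2\underline{10}$ in $e'$. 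The converse is symmetric.

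Your boundary worry evaporates for the same reason. The pair $(e'_{w_l},e'_{w_l+1})$ can never be the underlined part of a $2\underline{10}$ occurrence in $e'$, since every $e'_j$ with $j<w_l$ satisfies $e'_j\le e'_{w_l}$; and the pair $(e'_{w_{l+1}-1},e'_{w_{l+1}})$ is never a descent, since every entry inside the block is strictly below $e_{w_{l+1}}=e'_{w_{l+1}}$. So no sorting, no special handling of entries equal to the running maximum, and no separate inverse map are needed: blockwise reversal is the whole bijection.
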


\begin{proof}
Define a map $\varphi:\bI_{n}\rightarrow\bI_{n}$ as follows. Given $e\in\bI_{n}$ with $\W(e)=\left\{w_{1},w_{2},\ldots,w_{t}\right\}_<$,  define $\varphi(e)=e'$ as
\begin{equation}
\label{eq:eprime}
e'=(e_{w_{1}-1}e_{w_{1}-2}\dots e_{1})e_{w_{1}}(e_{w_{2}-1}e_{w_{2}-2}\dots e_{w_{1}+1})e_{w_{2}}\dots e_{w_{t}}(e_{n}e_{n-1}\dots e_{w_{t}+1}).
\end{equation}
In other words, $\varphi$ reverses the blocks between the elements of $\W(e)$, see Figure~\ref{fig:2-01_2-10} for an example.

\begin{figure}[htb]
\begin{center}
\includegraphics[scale=0.55]{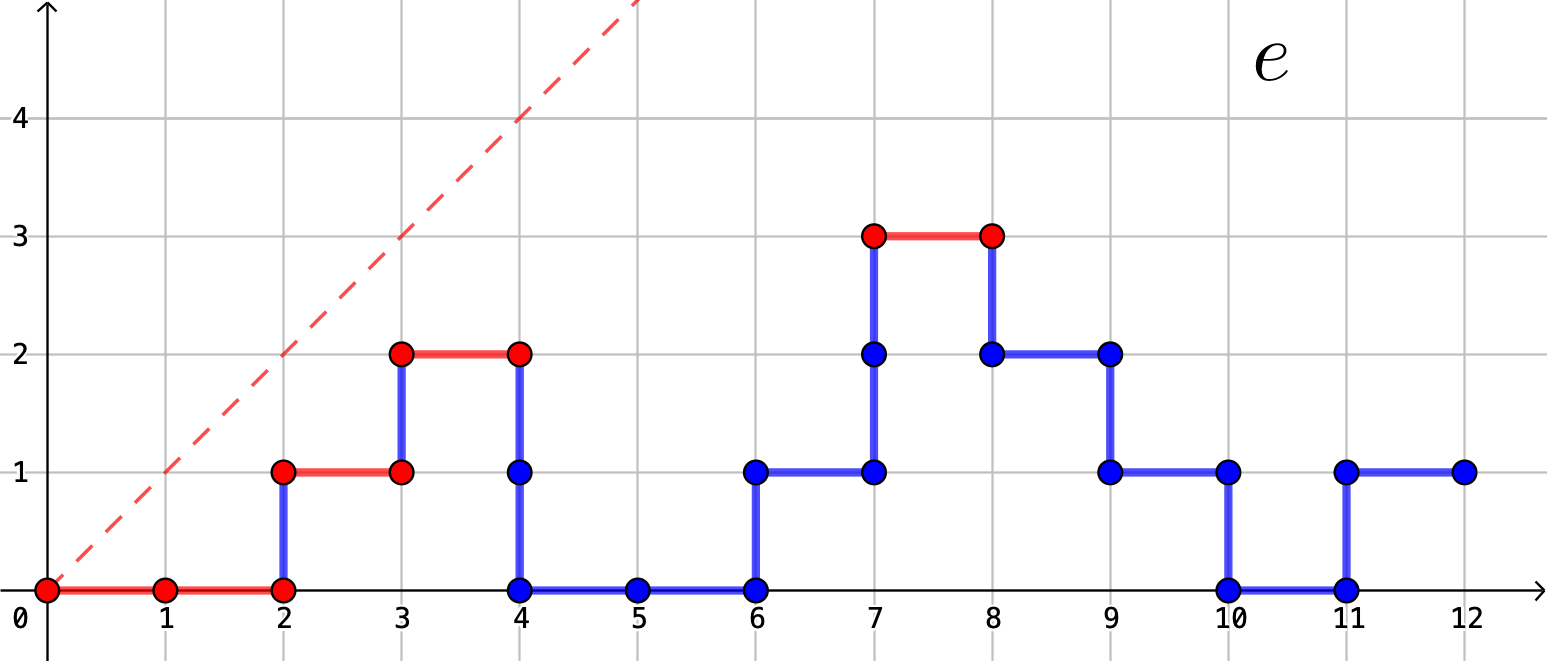}
\hspace*{0.2cm}
\includegraphics[scale=0.55]{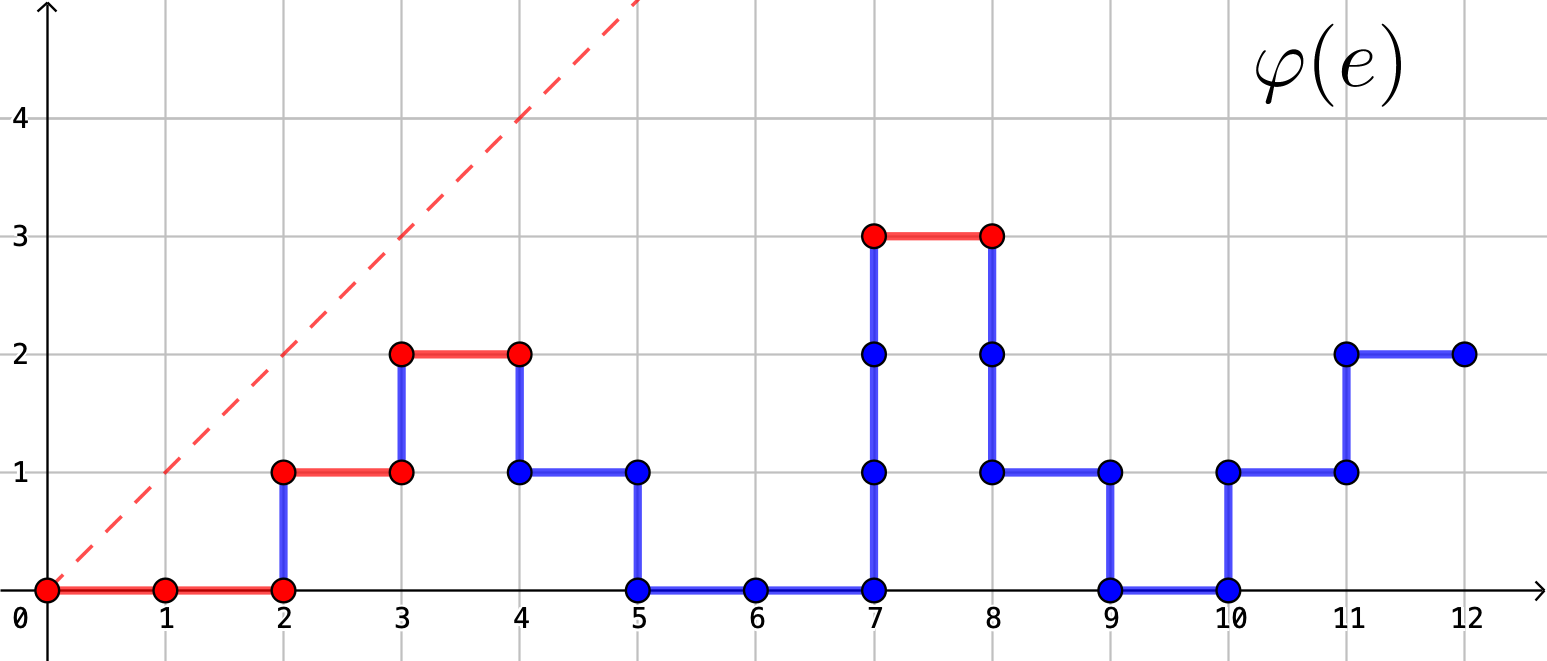}\par
\end{center}
\caption[Example of the action of the bijection $\varphi$]{The inversion sequence ${e=001200132101}$ and its image $\varphi(e)=001210031012$. The weak left-to-right maxima $\W(e)=\W(\varphi(e))=\left\{1,2,3,4,8\right\}$ are drawn in red.}\label{fig:2-01_2-10}
\end{figure}

It is clear by construction that $\varphi$ preserves weak left-to-right maxima, that is, $\W(e)=\W(e')$. It follows that $e'_{i}<i$ for all $i$, and so $e'\in\bI_{n}$. In addition, $\varphi$ is an involution, hence also a bijection. It remains to show that $e\in\bI_{n}\left(2\underline{01}\right)$ if and only if $e'\in\bI_{n}\left(2\underline{01}\right)$, or equivalently, that $e\notin\bI_{n}\left(2\underline{01}\right)$ if and only if $e'\notin\bI_{n}\left(2\underline{10}\right)$.

Suppose that $e\notin\bI_{n}\left(2\underline{01}\right)$, and let $e_{j}e_{i}e_{i+1}$ be an occurrence of $2\underline{01}$. Then $i$ and $i+1$ cannot be weak left-to-right maxima, and so there exists $l$ such that $w_{l}<i<i+1<w_{l+1}$ (with the convention $w_{t+1}:=n+1$). Writing $i=w_{l}+u$, we have
\[
e'_{w_{l+1}-u}=e_{i} \quad\textnormal{ and }\quad e'_{w_{l+1}-u-1}=e_{i+1}.
\]
Since $e'_{w_{l}}=e_{w_{l}}$, we deduce that $e'_{w_{l}}e'_{w_{l+1}-u-1}e'_{w_{l+1}-u}=e_{w_l}e_{i+1}e_{i}$
is an occurrence of $2\underline{10}$ in $e'$, and so $e'\notin\bI_{n}\left(2\underline{10}\right)$.

A similar argument shows that if $e'\notin\bI_{n}\left(2\underline{10}\right)$, then $e\notin\bI_{n}\left(2\underline{01}\right)$.
\end{proof}

It is important to note that the number of occurrences of $2\underline{01}$ in $e$ does not always coincide with the number of occurrences of $2\underline{10}$ in $e'=\varphi(e)$. For instance, ${e=0123012242}$ contains 3 occurrences of $2\underline{01}$ (namely, $e_{3}e_{5}e_{6}$, $e_{4}e_{5}e_{6}$, and $e_{4}e_{6}e_{7}$) but ${e'=0123221042}$ contains 5 occurrences of $2\underline{10}$ (namely, $e'_{3}e'_{7}e'_{8}$, $e'_{4}e'_{6}e'_{7}$, $e'_{4}e'_{7}e'_{8}$, $e'_{5}e'_{7}e'_{8}$, and $e'_{6}e'_{7}e'_{8}$). In fact, there are $470$ inversion sequences of length $7$ containing exactly one occurrence of $2\underline{10}$, but only $466$ containing exactly one occurrence of $2\underline{01}$. Hence, $2\underline{01}$ and $2\underline{10}$ are not strongly Wilf equivalent.

Next we generalize the proof of Proposition~\ref{prop:2_01_and_2_10} to prove Theorem~\ref{thm:gen_2-01_2-10}.

\begin{proof}[Proof of Theorem~\ref{thm:gen_2-01_2-10}]
To prove that $(d{+}1)\underline{p_{1}p_{2}\ldots p_{r}}\sim (d{+}1)\underline{p_{r}p_{r-1}\ldots p_{1}}$, we show that $e\in\bI_n$ contains $(d{+}1)\underline{p_{1}p_{2}\ldots p_{r}}$ if and only if $e'=\varphi(e)$, defined as in Equation~\eqref{eq:eprime}, contains $(d{+}1)\underline{p_{r}p_{r-1}\ldots p_{1}}$.

Suppose that $e_{j}e_{i}e_{i+1}\ldots e_{i+r-1}$ is an occurrence of $(d{+}1)\underline{p_{1}p_{2}\ldots p_{r}}$ in $e$. Since $e_j$ is the largest entry of this occurrence, we know that $i,i+1,\dots,i+r-1$ are not weak left-to-right maxima of $e$. Write $\W(e)=\left\{w_{1},w_{2},\ldots,w_{t}\right\}_<$, and let $l$ be such that $w_{l}<i<i+r-1<w_{l+1}$ (again with the convention $w_{t+1}:=n+1$). Writing $i=w_{l}+u$, we have $e'_{w_{l+1}-u-s}=e_{i+s}$, for $0\leq s\leq r-1$. Thus,
\[
e'_{w_{l}}e'_{w_{l+1}-u-r+1}e'_{w_{l+1}-u-r+2}\ldots e'_{w_{l+1}-u}=e_{w_{l}}e_{i+r+1}e_{i+r-2}\dots e_{i}
\]
is an occurrence of $(d{+}1)\underline{p_{r}p_{r-1}\ldots p_{1}}$ in $e'$, and so $e'\notin\bI_{n}\left((d{+}1)\underline{p_{r}p_{r-1}\ldots p_{1}}\right)$.

A similar argument shows that if $e'\notin\bI_{n}\left((d{+}1)\underline{p_{r}p_{r-1}\ldots p_{1}}\right)$, then $e\notin\bI_{n}\left((d{+}1)\underline{p_{1}p_{2}\ldots p_{r}}\right)$.
\end{proof}

\subsection{The patterns $1\protect\underline{01}$ and $1\protect\underline{10}$}

Next we prove Theorem~\ref{EquivVinc}(iii) using an inclusion-exclusion argument.
For $e\in\bI_{n}$ and $p\in\{1\ul{01},1\ul{10}\}$, define
$$\Em(p,e) = \left\{\{t,i,i+1\}_<:e_{t}e_{i}e_{i+1}\textnormal{ is an occurrence of }p\right\}.$$

\begin{lem}\label{lem:1_01_and_1_10_marked_occ} For every $n$ and $k$, there exists a bijection
\begin{multline}\label{eq:phi}
\phi:\left\{(e,M):e\in\bI_{n},\,M\subseteq\Em\left(1\underline{01},e\right),\,|M|=k\right\}\\
\to \left\{(e',M'):e'\in\bI_{n},\,M'\subseteq\Em\left(1\underline{10},e'\right),\,|M'|=k\right\}.
\end{multline}
\end{lem}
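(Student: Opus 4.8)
The plan is to construct $\phi$ by iterating a local replacement on the marked occurrences, working from right to left, in the spirit of the "cluster" arguments used for consecutive patterns. The key observation is that a marked occurrence of $1\underline{01}$ in $(e,M)$ is a triple $\{t,i,i+1\}_<$ with $e_t > e_i = e_{i+1}$... wait, no: in $1\underline{01}$ the reduction forces $e_i < e_t$ and $e_i < e_{i+1}$, with $e_t$ and $e_{i+1}$ playing the role of the two ``$1$''s, so $e_t, e_{i+1} > e_i$ but $e_t$ and $e_{i+1}$ need not be equal. Symmetrically, a marked occurrence of $1\underline{10}$ is a triple $\{t,i,i+1\}_<$ with $e'_t > e'_{i+1}$, $e'_i > e'_{i+1}$, and $e'_i,e'_t$ arbitrary subject to the reduction being $1\underline{10}$ — so what matters is only that $e'_{i+1}$ is strictly smaller than both $e'_i$ and $e'_t$. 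Thus the essential data carried by a marked occurrence in either case is: a position pair $(i,i+1)$ supporting a strict descent (for $1\underline{10}$) or strict ascent (for $1\underline{01}$) in the adjacent slot, together with an earlier ``large'' witness $t$. The map should swap $e_i$ and $e_{i+1}$ at the adjacent slots of marked occurrences while leaving the witness positions and the multiset of values untouched.

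Concretely, I would proceed as follows. First, extract from $M$ the set of adjacent-slot pairs $P = \{(i,i+1) : \{t,i,i+1\}_< \in M \text{ for some } t\}$; note distinct marked occurrences may share the same pair $(i,i+1)$ but with different witnesses $t$. Define $e'$ from $e$ by swapping the entries in positions $i$ and $i+1$ for every $(i,i+1) \in P$ (these swaps are at disjoint pairs once we check no index lies in two pairs of $P$ — this needs a small argument, since $(i,i+1)$ and $(i+1,i+2)$ could in principle both occur; here is where the structure of $1\underline{01}$ versus $1\underline{10}$ helps, because if $e_i<e_{i+1}$ and $e_{i+1}<e_{i+2}$ then after swapping we must track carefully, so I would instead define $P$ as a maximal set of non-overlapping pairs and argue the overlapping case separately, or handle ``runs'' of consecutive ascents as blocks that get reversed, analogously to the $\varphi$ construction in Proposition~\ref{prop:2_01_and_2_10}). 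Then set $M'$ to be the image of $M$ under the identity on index triples: $\{t,i,i+1\}_< \mapsto \{t,i,i+1\}_<$. The claims to verify are: (a) $e'\in\bI_n$, i.e. the swaps keep each entry below its position — swapping $e_i\leftrightarrow e_{i+1}$ when $e_i<e_{i+1}$ replaces $e_{i+1}$ by $e_i<i\le i+1$ and $e_i$ by $e_{i+1}<i+1$, but we need $e_{i+1}<i$; this is automatic because $e_{i+1}<i$ fails only if $e_{i+1}=i$, wait $e_{i+1}<i+1$ so $e_{i+1}\le i$, and if $e_{i+1}=i$ then... actually we need $e_i< e_{i+1}$ together with $e_{i+1}\le i$ forcing the new value in slot $i$ to be $e_{i+1}\le i$ but we need it $<i$, so the edge case $e_{i+1}=i$ must be excluded — but in an occurrence of $1\underline{01}$ the witness $t<i$ has $e_t>e_i$, which imposes no bound; this potential failure is exactly the subtle point and I expect it is resolved by a more careful definition involving the weak left-to-right maxima, paralleling Proposition~\ref{prop:2_01_and_2_10}; (b) $M'\subseteq\Em(1\underline{10},e')$, i.e. each swapped triple genuinely becomes a $1\underline{10}$-occurrence — the adjacent pair now forms a descent and the witness $e'_t=e_t>e_i=e'_{i+1}$; (c) $\phi$ is a bijection, which I would show by exhibiting the inverse as the analogous swap-back map from $1\underline{10}$-marked pairs.

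The main obstacle I anticipate is precisely the interaction between the swaps and overlapping occurrences, together with the boundary condition $e'_i<i$. A marked occurrence of $1\underline{01}$ at $\{t,i,i+1\}$ constrains positions $i,i+1$ but a second marked occurrence at $\{t',i+1,i+2\}$ would force position $i+1$ into two swaps. The clean way around this is to notice, as in the $\varphi$ construction, that positions $i$ and $i+1$ appearing in a marked $1\underline{01}$-occurrence are never weak left-to-right maxima of $e$ (since the witness $e_t>e_i$ lies to the left and $e_{i+1}>e_i$... hmm, $i+1$ could still be a weak LR max), so the swaps happen strictly inside the ``valleys'' between weak left-to-right maxima, and within such a block one can reverse/rearrange freely without violating $e'_j<j$ because the relevant upper bound is controlled by the weak left-to-right maximum bounding the block on the left. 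I would therefore organize the proof by first reducing to a single ``valley block'' via the weak left-to-right maxima decomposition, then prove the swap bijection on a block by induction on its length, handling runs of equal/ascending entries as units. Once the block-level statement is established, gluing the blocks back together and tracking that $M'$ lands in $\Em(1\underline{10},e')$ with $|M'|=|M|=k$ is routine.
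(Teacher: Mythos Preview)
Your high-level plan---reverse the adjacent blocks determined by the marked occurrences and carry the witnesses along---is exactly what the paper does. But you have misread the pattern, and that misreading is what generates every obstacle you list.

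In inversion sequences, repeated letters in a pattern mean \emph{equal} values in the occurrence, not merely values with the same relative position. An occurrence of $1\underline{01}$ at $\{t,i,i+1\}_<$ is a triple with $e_t = e_{i+1} > e_i$; an occurrence of $1\underline{10}$ is a triple with $e_t = e_i > e_{i+1}$. The two $1$'s are forced to be equal because the \emph{reduction} of the subsequence must be exactly $101$ (respectively $110$). Your version, where $e_t$ and $e_{i+1}$ are independent values both exceeding $e_i$, would correspond to a different pattern (one with three distinct letters) and the lemma is false for that pattern.

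Once the equality $e_t = e_{i+1}$ is in hand, your worries evaporate. The ``edge case $e_{i+1}=i$'' cannot occur: $e_{i+1}=e_t<t<i$. More generally, if $S=\{i:\{t,i,i+1\}_<\in M\text{ for some }t\}$ and $B_j=\{i_j,\dots,i_j+l_j-1\}$ is a maximal run of consecutive elements of $S$, then the occurrences at $i_j,i_j+1,\dots$ force $e_{i_j}<e_{i_j+1}<\dots<e_{i_j+l_j}$, and the witness for $i_j+l_j-1$ gives some $t<i_j$ with $e_t=e_{i_j+l_j}$. Hence every entry in the block $B_j\cup\{i_j+l_j\}$ is at most $e_t<t<i_j$, so reversing that block keeps you inside $\bI_n$. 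No appeal to weak left-to-right maxima is needed. The paper defines $e'_i=e_{\rho_S(i)}$ where $\rho_S$ reverses each such block, sets $M'=\{\{\rho_S(t),\rho_S(i+1),\rho_S(i)\}:\{t,i,i+1\}_<\in M\}$, and checks that $e'_{\rho_S(t)}e'_{\rho_S(i+1)}e'_{\rho_S(i)}=e_t e_{i+1}e_i$ is an occurrence of $1\underline{10}$; the inverse is the same construction applied to $(e',M')$, since $\rho_S$ is an involution and $\mathrm{proj}(M')=S$.

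So: fix the definition, drop the weak-LR-maxima detour, and your sketch becomes the paper's proof.
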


\begin{proof}
For $p\in\{1\ul{01},1\ul{10}\}$, pairs $(e,M)$ where $M\subseteq\Em\left(p,e\right)$ and $|M|=k$ can be interpreted as inversion sequences $e$ with $k$ {\em marked} occurrences of $p$, which are recorded by the set $M$.

Let $(e,M)$ be a pair from the left-hand side of~\eqref{eq:phi}. We will describe its image $\phi((e,M))=(e',M')$. First, let
\begin{equation}\label{eq:proj}
S=\proj(M)=\{i:\{t,i,i+1\}_<\in M\}\subseteq[n]
\end{equation}
be the set of the middle positions of the marked occurrences of $1\ul{01}$, disregarding multiplicities.
Write $S$ uniquely as a disjoint union of consecutive blocks (i.e., maximal subsets whose entries are consecutive), as $S=\bigsqcup_{j=1}^{m}B_{j}$, where $B_{j}=\left\{i_{j},i_{j}+1,\ldots,i_{j}+l_{j}-1\right\}$, with $l_{j}\geq 1$ and $i_j+l_j<i_{j+1}$, for all $j$.

We define $e'$ by setting $e'_i=e_{\rho_S(i)}$ for $1\le i\le n$, where $\rho_S$ is 
defined by
\begin{equation}\label{eq:rhoS}
  \rho_S(i) = \begin{cases}
      2i_{j}+l_{j}-i, & \text{if } i\in B_{j}\cup \{i_{j}+l_{j}\}\text{ for some }j;\\
      i, & \text{otherwise.}
      \end{cases}
\end{equation}

As illustrated in Figure~\ref{fig:1_01_1_10_2}, the transformation $e\mapsto e'$ reverses the entries of $e$ in positions $B_{j}\cup \{i_{j}+l_{j}\}$,
for each $j$, that is, $e'_{i_{j}}e'_{i_{j}+1}\ldots e'_{i_{j}+l_{j}} = e_{i_{j}+l_{j}} \ldots e_{i_{j}+1}e_{i_{j}}$.
Define $$M'=\left\{\{\rho_S(t),\rho_S(i+1),\rho_S(i)\}:\{t,i,i+1\}_<\in M\right\}.$$

\begin{figure}[htb]
	\noindent \begin{centering}
  \includegraphics[width=0.8\textwidth]{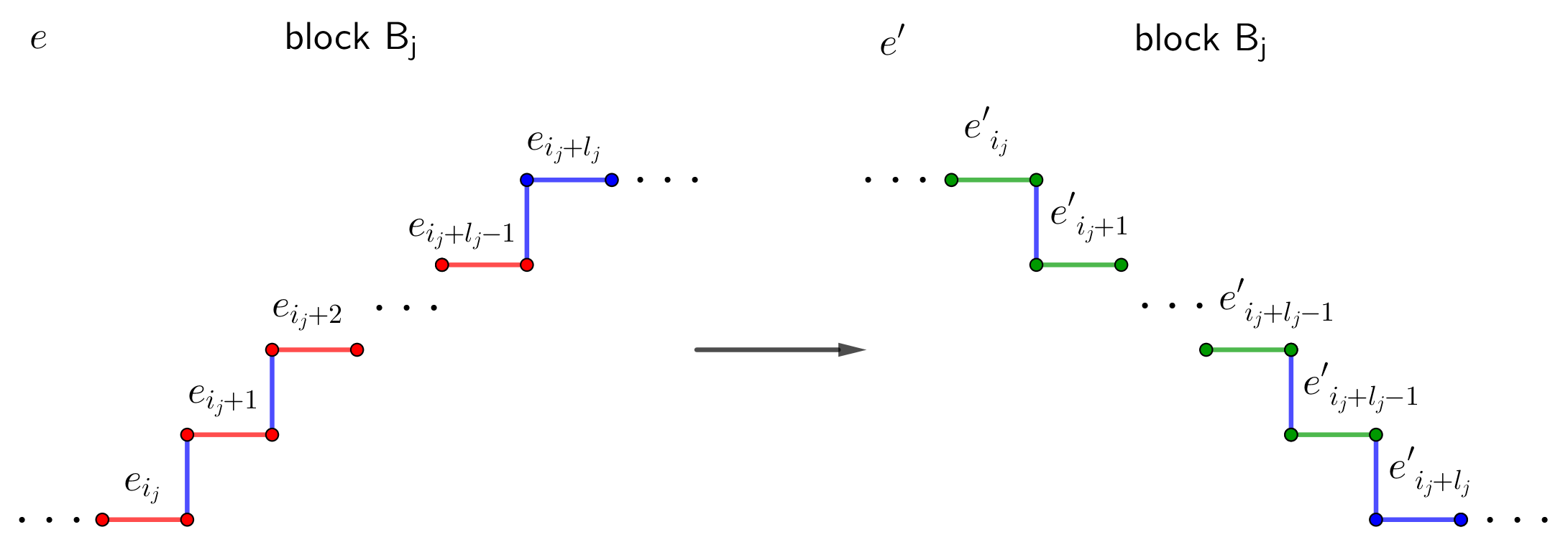}
		\par\end{centering}

  \protect\caption{Schematic diagram of the transformation $e\mapsto e'$ defined in Equation~\eqref{eq:rhoS}.}
	\label{fig:1_01_1_10_2}

\end{figure}

Let us show that $(e',M')$ belongs to the right-hand side of~\eqref{eq:phi}.
For each block $B_j$, since $i_{j}+l_{j}-1\in S$, there exists $t<i_{j}$ such that $e_{t}=e_{i_{j}+l_{j}}$. Thus, if
$i\in B_{j}\cup \{i_{j}+l_{j}\}$,
then
\[
e'_{i} = e_{2i_{j}+l_{j}-i} \leq e_{i_{j}+l_{j}} = e_{t} < t < i_{j} \leq i,
\]
and so $e'\in\bI_n$. Now we argue that $M'\subseteq\Em\left(1\underline{10},e'\right)$. For every $\{t,i,i+1\}_<\in M$, if $B_j$ is the block that $i$ belongs to, then $t<i_j$, and so
$e'_{\rho_S(t)}e'_{\rho_S(i+1)}e'_{\rho_S(i)}=e_te_{i+1}e_i$ is an occurrence of $1\underline{10}$ in $e'$.
It is also clear by construction that $|M'|=|M|=k$.

Finally, the fact that $\proj(M')=S$ allows us to describe the inverse of the map $\phi$ as follows. Given a pair $(e',M')$ from the right-hand side of~\eqref{eq:phi}, let $S=\proj(M')$. Let $\phi'((e',M'))$ be the pair $(e,M)$ obtained by setting $e_i=e'_{\rho_S(i)}$ for $1\le i\le n$ and $$M'=\left\{\{\rho_S(t'),\rho_S(i'+1),\rho_S(i')\}:\{t',i',i'+1\}_<\in M'\right\}.$$
The fact that $\rho_S:[n]\to[n]$ is an involution implies that $\phi$ and $\phi'$ are inverses of each other.
\end{proof}

\begin{prop}\label{prop:1_01_and_1_10} The patterns $1\underline{01}$ and $1\underline{10}$ are strongly Wilf equivalent.
\end{prop}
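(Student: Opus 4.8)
The proposition will follow from Lemma~\ref{lem:1_01_and_1_10_marked_occ} by a routine sieve (inclusion--exclusion) argument, so the plan is to extract the count $\left|\bI_n(p,m)\right|$ from the count of pairs $(e,M)$ with $k$ marked occurrences. The only preliminary point to nail down is that, for $p\in\{1\underline{01},1\underline{10}\}$ and any $e\in\bI_n$, an occurrence $e_te_ie_{i+1}$ of $p$ is completely determined by its set of positions $\{t,i,i+1\}_<$; consequently $\left|\Em(p,e)\right|=\oc(p,e)$. (This uses that $p$ has length $3$ and that the last two entries of an occurrence sit in consecutive positions, so the triple of positions recovers which entry plays which role.)

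\textbf{Key steps.} First I would count the domain of $\phi$ in Lemma~\ref{lem:1_01_and_1_10_marked_occ}: the number of pairs $(e,M)$ with $e\in\bI_n$, $M\subseteq\Em(1\underline{01},e)$, and $|M|=k$ equals $\sum_{e\in\bI_n}\binom{\left|\Em(1\underline{01},e)\right|}{k}=\sum_{e\in\bI_n}\binom{\oc(1\underline{01},e)}{k}=\sum_{m\ge0}\left|\bI_n(1\underline{01},m)\right|\binom{m}{k}$, and likewise for the codomain with $1\underline{10}$. Since $\phi$ is a bijection, these two sums are equal for every $n$ and every $k\ge 0$. Second, I would convert this family of equalities into equality of the refined counts. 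Multiplying by $x^k$ and summing over $k$, the identity $\sum_{k\ge0}\left(\sum_{m}\left|\bI_n(1\underline{01},m)\right|\binom{m}{k}\right)x^k=\sum_{m}\left|\bI_n(1\underline{01},m)\right|(1+x)^m$ shows that the polynomials $\sum_{m}\left|\bI_n(1\underline{01},m)\right|\,y^m$ and $\sum_{m}\left|\bI_n(1\underline{10},m)\right|\,y^m$ agree after the substitution $y=1+x$, hence are identical; equating coefficients of $y^m$ gives $\left|\bI_n(1\underline{01},m)\right|=\left|\bI_n(1\underline{10},m)\right|$ for all $n$ and $m$. (Equivalently, one inverts the unitriangular binomial matrix $\bigl(\binom{m}{k}\bigr)_{m,k}$.) By Definition~\ref{def:Wilf_vinc}, this is exactly $1\underline{01}\stackrel{s}{\sim}1\underline{10}$.

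\textbf{Expected obstacle.} There is no substantial obstacle left: the combinatorial content lives entirely in Lemma~\ref{lem:1_01_and_1_10_marked_occ}, whose bijection $\phi$ has already been constructed. The only place to be mildly careful is the first observation, namely the identity $\left|\Em(p,e)\right|=\oc(p,e)$ for $p\in\{1\underline{01},1\underline{10}\}$ — without it the sums above would not be expressed in terms of $\oc$, and the whole translation to $\left|\bI_n(p,m)\right|$ would fail. After that, the binomial inversion is entirely formal.
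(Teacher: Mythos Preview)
Your proposal is correct and follows essentially the same route as the paper: both arguments use Lemma~\ref{lem:1_01_and_1_10_marked_occ} to equate the counts $\sum_{m}\left|\bI_n(p,m)\right|\binom{m}{k}$ for $p\in\{1\underline{01},1\underline{10}\}$, and then invert via the binomial transform (the paper also passes through the polynomial identity $\sum_k\mu_n(p,k)x^k=\sum_m|\bI_n(p,m)|(1+x)^m$). Your explicit remark that $\left|\Em(p,e)\right|=\oc(p,e)$ is a helpful clarification that the paper leaves implicit.
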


\begin{proof}
For $p\in\{1\ul{01},1\ul{10}\}$, let
$$\mu_n(p,k)=\left|\left\{(e,M):e\in\bI_{n},\,M\subseteq\Em\left(p,e\right),\,|M|=k\right\}\right|$$
be the number of inversion sequences in $\bI_n$ with $k$ marked occurrences of $p$.

An inversion sequence with $k$ marked occurrences of $p$ can be constructed by first choosing an inversion sequence with exactly $m$ occurrences of $p$, for some $m\ge k$, and then marking $k$ occurrences, which can be done in $\binom{m}{k}$ ways. It follows that
\begin{equation}\label{eq:mu}
\mu_n(p,k)=\sum_{m\ge k} \left|\bI_n(p,m)\right|\binom{m}{k},
\end{equation}
which can be inverted using the Principle of Inclusion-Exclusion to obtain an expression for $\left|\bI_n(p,m)\right|$ in terms of $\mu_n(p,k)$.

Specifically, multiplying Equation~\eqref{eq:mu} by $x^k$ and summing over $k\ge0$, we can write it as an equality of polynomials:
$$\sum_{k\ge0} \mu_n(p,k)x^k=\sum_{k\ge0} \sum_{m\ge k} \left|\bI_n(p,m)\right|\binom{m}{k}x^k=\sum_{m\ge0} \left|\bI_n(p,m)\right| \sum_{k=0}^m \binom{m}{k} x^k= \sum_{m\ge0} \left|\bI_n(p,m)\right|(1+x)^m.$$
Setting $x=y-1$, we get
$$\sum_{m\ge0} \left|\bI_n(p,m)\right| y^m=\sum_{k\ge0} \mu_n(p,k)(y-1)^k=\sum_{k\ge0}\sum_{m=0}^k \mu_n(p,k) \binom{k}{m}(-1)^{k-m} y^m,$$
and taking coefficients of $y^m$ on both sides,
$$\left|\bI_n(p,m)\right|=\sum_{k\ge m}\mu_n(p,k) \binom{k}{m}(-1)^{k-m}.$$
By Lemma~\ref{lem:1_01_and_1_10_marked_occ}, $\mu_n(1\ul{01},k)=\mu_n(1\ul{10},k)$ for all $n$ and $k$. We conclude that
$$\left|\bI_n(1\ul{01},m)\right|=\left|\bI_n(1\ul{10},m)\right|$$
for all $n$ and~$m$.
\end{proof}

Even though Proposition~\ref{prop:1_01_and_1_10} states that the number of occurrences $1\underline{01}$ is equidistributed with the number of occurrences of $1\underline{10}$ over $e\in\bI_n$, the joint distribution of the number of occurrences of these two patterns is not symmetric, that is,  there exist integers $l,m,n$ such that
\[
\left|\bI_{n}\left(1\underline{01},m\right)\cap\bI_{n}\left(1\underline{10},l\right)\right| \neq \left|\bI_{n}\left(1\underline{01},l\right)\cap\bI_{n}\left(1\underline{10},m\right)\right|.
\]
For instance, $\bI_{6}\left(1\underline{01},3\right)\cap\bI_{6}\left(1\underline{10},0\right) = \emptyset$, but $\bI_{6}\left(1\underline{01},0\right)\cap\bI_{6}\left(1\underline{10},3\right) = {\{011110\}}$.

It is also easy to check that the patterns $1\underline{01}$ and $1\underline{10}$ are not super-strongly Wilf equivalent. Indeed, with the notation from Equation~\eqref{eq:def_super_strongly}, the sets
\begin{align*}
\bI_{6}\left(1\underline{01},\left\{\{2,5,6\},\{4,5,6\}\right\}\right) &= \left\{{012101}\right\} \textrm{ and }\\
\bI_{6}\left(1\underline{10},\left\{\{2,5,6\},\{4,5,6\}\right\}\right) &= \left\{{010110}, {012110}\right\}
\end{align*}
have different cardinalities.

We end this section showing that, despite $1\underline{01}$ and $1\underline{10}$ not being super-strongly Wilf equivalent, the sets of positions of the  middle entries of occurrences of these patterns in inversion sequences are equidistributed. This is stated as Proposition~\ref{prop:1_01_and_1_10_S} below, and proved using an inclusion-exclusion argument, similar to the one used to prove the equivalence $\underline{110}\stackrel{ss}{\sim} \underline{100}$ in~\cite[Prop.~3.11]{AuliElizalde}.

For $e\in\bI_{n}$ and $p\in\{1\ul{01},1\ul{10}\}$, and letting $\proj$ be defined as in Equation~\eqref{eq:proj}, define
$$\Em^{*}(p,e) = \proj(\Em(p,e))=\{i:\exists j<i\textnormal{ such that }e_{j}e_{i}e_{i+1}\textnormal{ is an occurrence of }p\}.$$

\begin{lem}\label{lem:1_01_and_1_10} For every $S\subseteq [n]$, the map $e\mapsto e'$ where $e'_i=e_{\rho_S(i)}$ for $1\le i\le n$, as defined in Equation~\eqref{eq:rhoS}, is a bijection
$$
\left\{e\in\bI_{n}:\Em^{*}\left(1\underline{01},e\right)\supseteq S\right\}\rightarrow \left\{e'\in\bI_{n}:\Em^{*}\left(1\underline{10},e'\right)\supseteq S\right\}.
$$
\end{lem}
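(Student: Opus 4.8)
The claim is that the same map $\rho_S$ from Equation~\eqref{eq:rhoS} that drives Lemma~\ref{lem:1_01_and_1_10_marked_occ} also gives a bijection between inversion sequences whose set of middle-positions of $1\ul{01}$-occurrences \emph{contains} $S$ and those whose set of middle-positions of $1\ul{10}$-occurrences contains $S$. The natural plan is: (1) observe that $\rho_S$ depends only on the block decomposition of $S$ (not on any inversion sequence), and is an involution on $[n]$; (2) show that if $\Em^*(1\ul{01},e)\supseteq S$ then $e'$ is a genuine inversion sequence and $\Em^*(1\ul{10},e')\supseteq S$; (3) by the symmetry of the construction under swapping the roles of $1\ul{01}$ and $1\ul{10}$ (and the involutivity of $\rho_S$), the same argument run backwards gives the reverse inclusion, so $e\mapsto e'$ is the desired bijection with inverse $e'\mapsto e$.

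**Key steps.** First I would write $S=\bigsqcup_{j=1}^m B_j$ with $B_j=\{i_j,i_j+1,\dots,i_j+l_j-1\}$ and $i_j+l_j<i_{j+1}$, exactly as in the proof of Lemma~\ref{lem:1_01_and_1_10_marked_occ}, and note that $\rho_S$ reverses the entries of $e$ on each window $B_j\cup\{i_j+l_j\}$ and fixes everything else; in particular $\rho_S\circ\rho_S=\mathrm{id}$. Second, the key local fact: for each $j$, since $i_j+l_j-1\in S=\Em^*(1\ul{01},e)$ (when we start from the left-hand side), there is some $t<i_j$ with $e_t=e_{i_j+l_j}$, which forces $e_{i_j+l_j}=e_t<t<i_j\le i$ for every $i$ in the window, so reversing the window keeps all values below their new positions and hence $e'\in\bI_n$; this is the same computation displayed just after Figure~\ref{fig:1_01_1_10_2}. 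Third, I would show $\Em^*(1\ul{10},e')\supseteq S$: for each $i\in B_j$ pick $t<i_j$ with $e_te_ie_{i+1}$ an occurrence of $1\ul{01}$ in $e$; then $e'_t e'_{\rho_S(i)}e'_{\rho_S(i)+1}=e_t\,e_{i+1}\,e_i$, and one checks that $\{\rho_S(i),\rho_S(i)+1\}=\{\rho_S(i+1),\rho_S(i)\}$ as a consecutive pair (since $i+1\in B_j\cup\{i_j+l_j\}$ too), so this is an occurrence of $1\ul{10}$ in $e'$ with middle position $\rho_S(i)$; but I must also verify $\rho_S(i)=i$ for $i\in S$, which is forced because $i\in B_j$ means $\rho_S(i)=2i_j+l_j-i$ lies again in $B_j$, and more care is needed to confirm it equals $i$ only for the "wrong" indices — actually the right statement is that $\rho_S$ permutes $B_j$, so $\Em^*(1\ul{10},e')$ contains $\rho_S(S)$; since $\rho_S$ maps each $B_j$ onto itself, $\rho_S(S)=S$, giving the inclusion. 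Finally, swapping $1\ul{01}\leftrightarrow 1\ul{10}$ in the above and using that $\rho_S$ is an involution shows the map $e'\mapsto e$ lands in $\{e:\Em^*(1\ul{01},e)\supseteq S\}$ and is a two-sided inverse, completing the bijection.

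**Main obstacle.** The only delicate point is the bookkeeping around the fact that $\Em^*\supseteq S$ is an \emph{inclusion}, not an equality: reversing a window $B_j\cup\{i_j+l_j\}$ can create new $1\ul{10}$-occurrences whose middle positions lie \emph{outside} $S$, and it may also be unclear a priori that every middle position in $S$ survives — so the argument must (a) show every $i\in S$ remains a middle position of a $1\ul{10}$-occurrence in $e'$, using that $\rho_S$ fixes $S$ setwise and the witness $t<i_j$ is untouched by $\rho_S$, and (b) not need any control on the possibly-larger set $\Em^*(1\ul{10},e')\setminus S$, since the statement only asks for $\supseteq S$. I expect the cleanest writeup to mirror the relevant paragraphs of the proof of Lemma~\ref{lem:1_01_and_1_10_marked_occ} almost verbatim, replacing "marked occurrences indexed by $M$" with "all middle positions lie in the prescribed set $S$," and to invoke the involutivity of $\rho_S$ for the inverse rather than re-deriving it.
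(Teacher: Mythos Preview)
Your overall plan matches the paper's proof exactly: invoke the computation from Lemma~\ref{lem:1_01_and_1_10_marked_occ} to see $e'\in\bI_n$, check $\Em^*(1\ul{10},e')\supseteq S$, and use that $\rho_S$ is an involution for the inverse. However, your third step has an indexing error that makes the argument, as written, fail.

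The problem is the claim that $\rho_S$ maps each $B_j$ onto itself. It does not: on the window $\{i_j,\dots,i_j+l_j\}$ the reversal sends $i_j\mapsto i_j+l_j$, so $\rho_S(B_j)=\{i_j+1,\dots,i_j+l_j\}\ne B_j$, and hence $\rho_S(S)\ne S$. Relatedly, your formula $e'_{\rho_S(i)}e'_{\rho_S(i)+1}=e_{i+1}e_i$ is wrong: since $e'_{\rho_S(i)}=e_i$ (by involutivity), the pair $(\rho_S(i),\rho_S(i)+1)$ does not recover $(e_{i+1},e_i)$. The correct consecutive pair in $e'$ is $(\rho_S(i{+}1),\rho_S(i{+}1)+1)=(\rho_S(i{+}1),\rho_S(i))$, giving
\[
e'_{\rho_S(t)}\,e'_{\rho_S(i+1)}\,e'_{\rho_S(i)}=e_t\,e_{i+1}\,e_i,
\]
an occurrence of $1\ul{10}$ in $e'$ with middle position $\rho_S(i{+}1)$ (this is exactly the displayed line in the proof of Lemma~\ref{lem:1_01_and_1_10_marked_occ}). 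Now the map $i\mapsto\rho_S(i{+}1)$ \emph{is} a bijection $B_j\to B_j$, so $\Em^*(1\ul{10},e')\supseteq S$ follows. Two smaller points: the witness in $e'$ must be at position $\rho_S(t)$, not $t$ (they may differ if $t$ lies in an earlier window, though one still has $\rho_S(t)<i_j$); and the assertion that one can take $t<i_j$ deserves one line of justification---since every $k\in B_j$ satisfies $e_k<e_{k+1}$, having $i_j\le t<i$ would force $e_t<e_{i+1}=e_t$.
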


\begin{proof}
Given $e\in\bI_{n}$ with $\Em^{*}\left(1\underline{01},e\right)\supseteq S$, the same argument as in the proof of Lemma~\ref{lem:1_01_and_1_10_marked_occ} shows that its image $e'$ satisfies that $e'\in\bI_n$ and $\Em^{*}\left(1\underline{10},e'\right)\supseteq S$.
This map is a bijection because, for any $e'\in\bI_n$ with $\Em^{*}\left(1\underline{10},e'\right)\supseteq S$, one can recover its preimage $e$ by setting $e_i=e'_{\rho_S(i)}$ for $1\le i\le n$.
\end{proof}

\begin{prop}\label{prop:1_01_and_1_10_S} For every $S\subseteq [n]$,
\[
\left|\left\{e\in\bI_{n}:\Em^{*}\left(1\underline{01},e\right)= S\right\}\right| = \left|\left\{e\in\bI_{n}:\Em^{*}\left(1\underline{10},e\right)= S\right\}\right|.
\]
\end{prop}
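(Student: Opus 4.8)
The plan is to deduce this from Lemma~\ref{lem:1_01_and_1_10} by an inclusion--exclusion argument over the Boolean lattice of subsets of $[n]$, following the same strategy used for $\ul{110}\stackrel{ss}{\sim}\ul{100}$ in~\cite[Prop.~3.11]{AuliElizalde}. Fix $n$. For $p\in\{1\ul{01},1\ul{10}\}$ and $T\subseteq[n]$, set
$$a_p(T)=\left|\left\{e\in\bI_{n}:\Em^{*}(p,e)=T\right\}\right|\quad\text{and}\quad A_p(T)=\left|\left\{e\in\bI_{n}:\Em^{*}(p,e)\supseteq T\right\}\right|.$$
Since $\Em^{*}(p,e)$ is always a subset of $[n]$ (indeed of $\{2,\dots,n-1\}$), every $e$ counted by $A_p(T)$ has $\Em^{*}(p,e)=U$ for a unique $U\supseteq T$, so $A_p(T)=\sum_{U\supseteq T}a_p(U)$ is a finite sum. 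This relation can be inverted by Möbius inversion on the Boolean lattice to give $a_p(T)=\sum_{U\supseteq T}(-1)^{|U\setminus T|}A_p(U)$.

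First I would observe that Lemma~\ref{lem:1_01_and_1_10} provides, for each $S\subseteq[n]$, a bijection between the two sets counted by $A_{1\ul{01}}(S)$ and $A_{1\ul{10}}(S)$, and hence $A_{1\ul{01}}(S)=A_{1\ul{10}}(S)$ for all $S\subseteq[n]$. Substituting this into the inversion formula above, with $T=S$ and $U$ ranging over all supersets of $S$, immediately yields $a_{1\ul{01}}(S)=a_{1\ul{10}}(S)$, which is exactly the assertion of the proposition.

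There is no serious obstacle once Lemma~\ref{lem:1_01_and_1_10} is in hand; the only points worth flagging are the following. The map $e\mapsto e'$ of that lemma does \emph{not} satisfy $\Em^{*}(1\ul{10},e')=S$ when $\Em^{*}(1\ul{01},e)=S$: it only preserves the inclusion $\Em^{*}(1\ul{10},e')\supseteq S$, since reversing the blocks determined by $S$ can create new occurrences of $1\ul{10}$ whose middle entries lie outside $S$. Thus a direct bijective proof of the equality version is not available, and the inclusion--exclusion step is genuinely needed. It is also essential here that $\rho_S$, and hence the bijection of Lemma~\ref{lem:1_01_and_1_10}, depends only on $S$ and not on any choice of marking --- which is precisely why passing from $\Em$ to $\Em^{*}$ (recording middle positions without multiplicity) is the right move. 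For context, I would add the remark that this shows the positions of the middle entries of occurrences of $1\ul{01}$ and $1\ul{10}$ are equidistributed over $\bI_n$, even though, as noted just before the proposition, these patterns are not super-strongly Wilf equivalent.
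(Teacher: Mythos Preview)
Your proof is correct and follows essentially the same approach as the paper: define the ``equal'' and ``contains'' counts, relate them by $A_p(T)=\sum_{U\supseteq T}a_p(U)$, invert via M\"obius/inclusion--exclusion on the Boolean lattice, and invoke Lemma~\ref{lem:1_01_and_1_10} to match the $A_p$'s. Your additional remarks on why the bijection of Lemma~\ref{lem:1_01_and_1_10} does not directly give the equality version are accurate and helpful, but the core argument is the same as the paper's.
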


\begin{proof}
For $p\in\{1\ul{01},1\ul{10}\}$ and $S\subseteq [n]$, let
\[
f^p_{=}(S)=\left|\left\{e\in\bI_{n}:\Em^{*}(p,e)= S\right\}\right|\quad\textnormal{ and }\quad f^p_{\geq}(S)=\left|\left\{e\in\bI_{n}:\Em^{*}(p,e)\supseteq S\right\}\right|.
\]
It is clear that, for every $S\subseteq [n]$,
\[
\left\{e\in\bI_{n}:\Em^{*}\left(p,e\right)\supseteq S\right\} = \bigsqcup_{T\supseteq S}\left\{e\in\bI_{n}:\Em^{*}(p,e)= T\right\},
\]
and so
\[
f^p_{\geq}(S)=\sum_{T\supseteq S}f^p_{=}(T).
\]
The Principle of Inclusion-Exclusion~{\cite[Thm.\ 2.1.1]{Stanley}} implies that
$$
f^p_{=}(S)=\sum_{T\supseteq S}(-1)^{|T\setminus S|}f^p_{\geq}(T).
$$
By Lemma~\ref{lem:1_01_and_1_10}, $f^{1\ul{01}}_{\geq}(T)=f^{1\ul{10}}_{\geq}(T)$, for all $T\subseteq[n]$. Thus, it follows that
$f^{1\ul{01}}_{=}(S)=f^{1\ul{10}}_{=}(S)$, for all $S\subseteq[n]$.
\end{proof}

\subsection{The patterns $\protect\underline{10}0$ and $\protect\underline{10}1$}\label{subsec:10-0_10-1}

In this subsection we prove Theorem~\ref{EquivVinc}(ii). Our approach relies on constructing isomorphic generating trees for inversion sequences avoiding $\underline{10}0$ and for those avoiding $\underline{10}1$. We determine such generating trees by succession rules that describe their growth by insertions on the right, in the same manner that generating trees for certain subclasses of pattern avoiding permutations have been constructed in~\cite{Bernini, Bouvel, ElizaldeTrees}. First, we introduce some terminology regarding generating trees and succession rules, following Bouvel et al.~\cite{Bouvel}. For a more detailed presentation of these topics, see~\cite{Banderier, Barcucci, BousquetMelou, West}.

Let $\mathcal{G}$ be a combinatorial class, with a finite number of objects of size $n$ for each $n\ge0$, and suppose that $\mathcal{G}$ contains exactly one object of size 0. A {\it generating tree} for $\mathcal{G}$ is a (typically infinite) rooted tree whose vertices are the objects of $\mathcal{G}$, and such that objects of size $n$ are at level $n$ in the tree, i.e., at distance $n$ from the root.

\begin{figure}[htb]
\begin{center}
\includegraphics[scale=0.42]{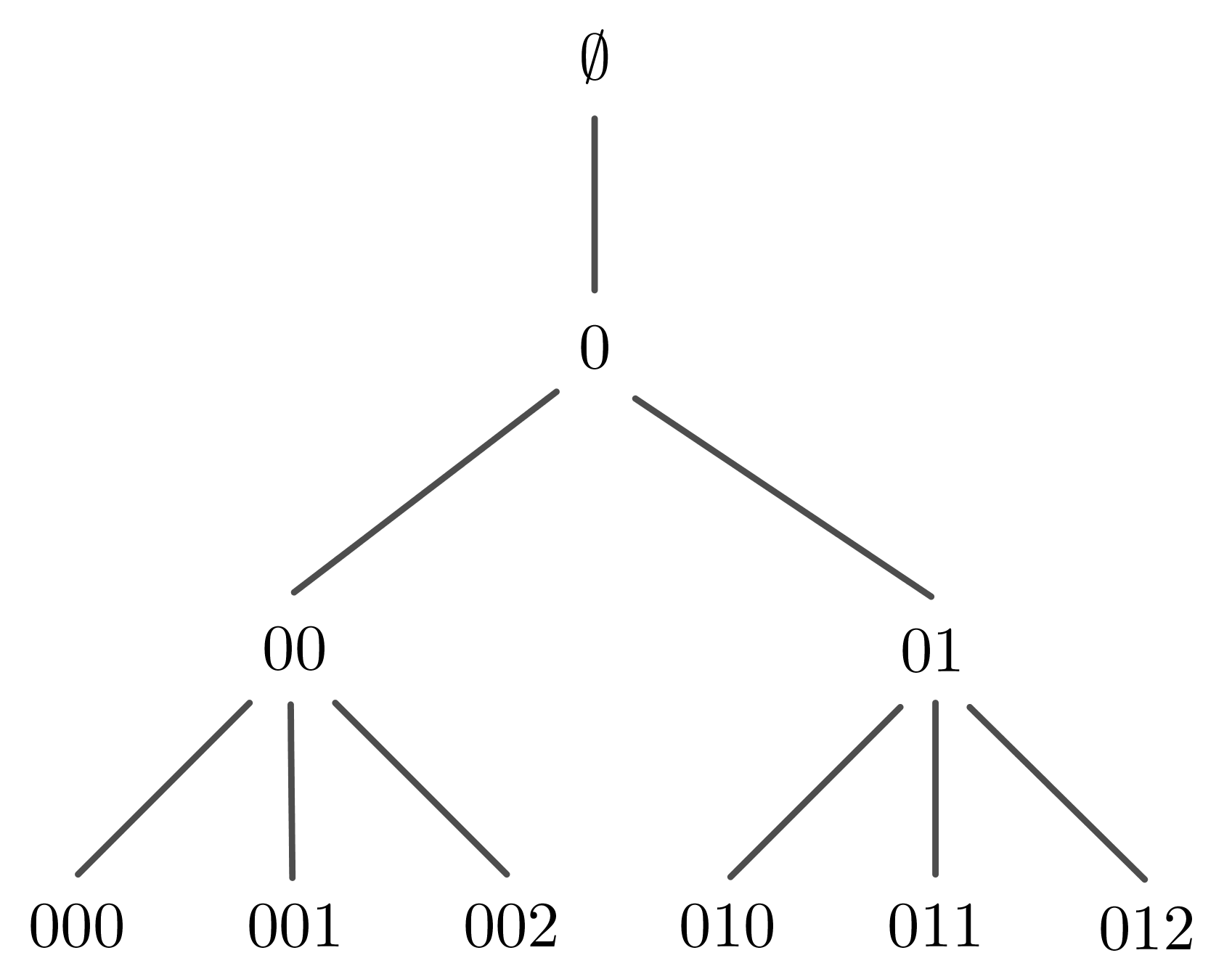}
\hspace*{1.5cm}
\includegraphics[scale=0.42]{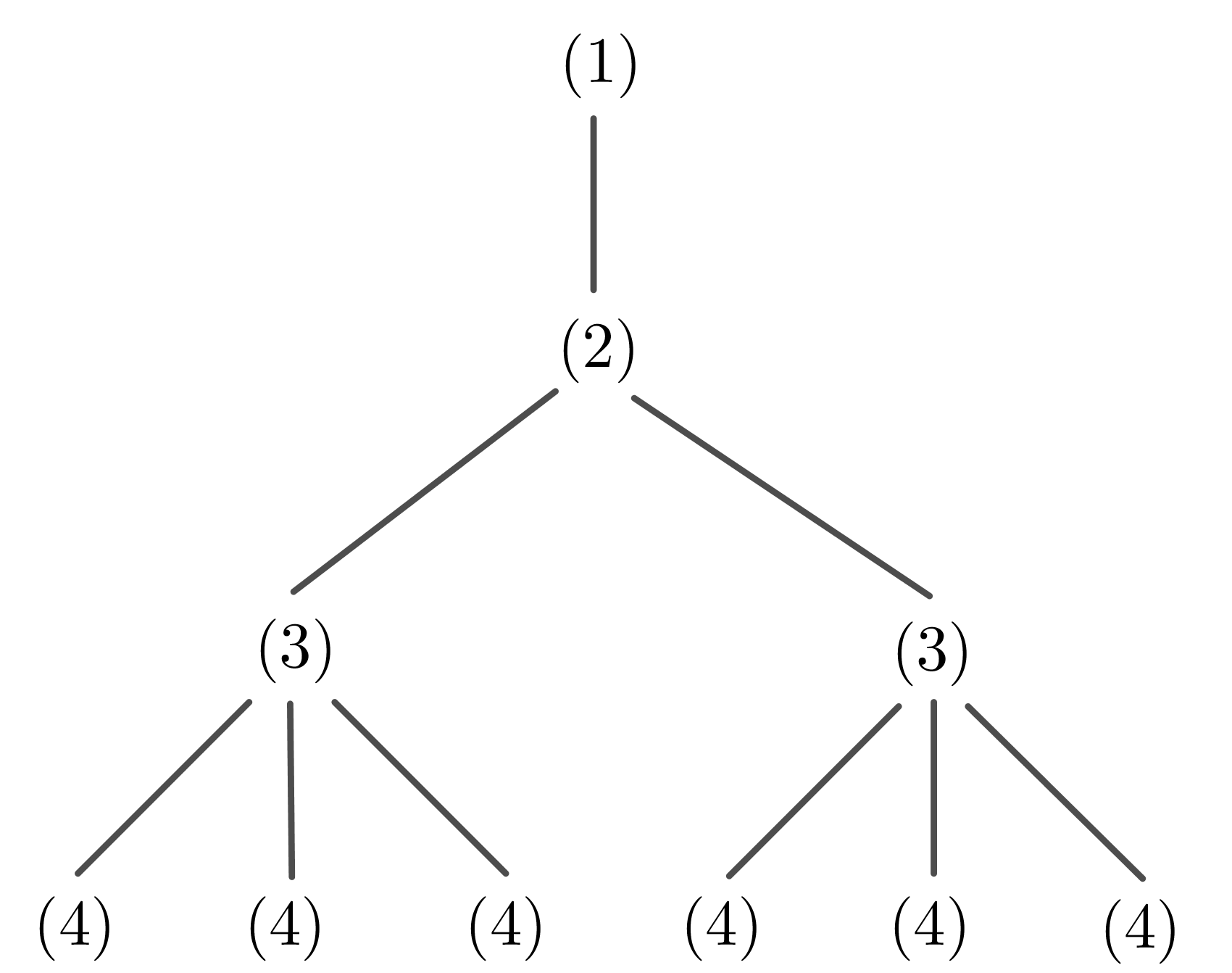}\par
\end{center}
\caption[Two representations of a generating tree for inversion sequences]{Two representations of a generating tree for inversion sequences: with objects for vertices (left) and with labels from the succession rule $\Omega_{\Inv}$ for vertices (right).}\label{fig:figure_trees}
\end{figure}

The children of an object $g\in\mathcal{G}$ are obtained by adding an {\it atom} ---that is, a piece that increases the size by 1--- to $g$. These additions must follow certain prescribed rules, which are determined by the structure of the objects of $\mathcal{G}$. In particular, these rules ensure that each object appears exactly once in the tree. We refer to the process of adding an atom to $g\in\mathcal{G}$ as the {\it growth} of $g$.

Given an inversion sequence $e=e_{1}e_{2}\ldots e_{n}\in\bI_{n}$, we grow $e$ by inserting an entry $h$ on the right, chosen from the set of values $\left\{0,1,\ldots,n\right\}$, called {\it sites}, to obtain the inversion sequence
\[
eh=e_{1}e_{2}\ldots e_{n}h\in\bI_{n+1}.
\]
The generating tree obtained in this manner, depicted in Figure~\ref{fig:figure_trees}(left), is the one for the class of all inversion sequences, which we denote by $\Inv=\bigcup_{n\ge0}\bI_n$. If instead we consider the subclass of inversion sequences satisfying a certain restriction, then not all the sites in $\left\{0,1,\ldots,n\right\}$ are valid choices for $h$, in the sense that $eh$ may not belong to the subclass. Sites that are valid are called the {\it active sites} of $e$. We denote the subclass of inversion sequences avoiding the pattern $p$ by $\Inv(p)=\bigcup_{n\ge0}\bI_n(p)$.
Whenever we speak of the growth or the active sites of $e\in\bI_{n}(p)$, we think of $e$ as an object of the class $\Inv(p)$, as opposed to as an object of $\Inv$.

\begin{exa}
The active sites of $e={01032453}\in\bI_8(\protect\underline{10}0)$ are $\{1,4,5,6,7,8\}$, since these are the values of $h$ for which $eh\in\bI_{9}(\underline{10}0)$, see Figure~\ref{fig:active_sites}.
\end{exa}

\begin{figure}[htp]
\begin{center}
	\includegraphics[scale=0.52]{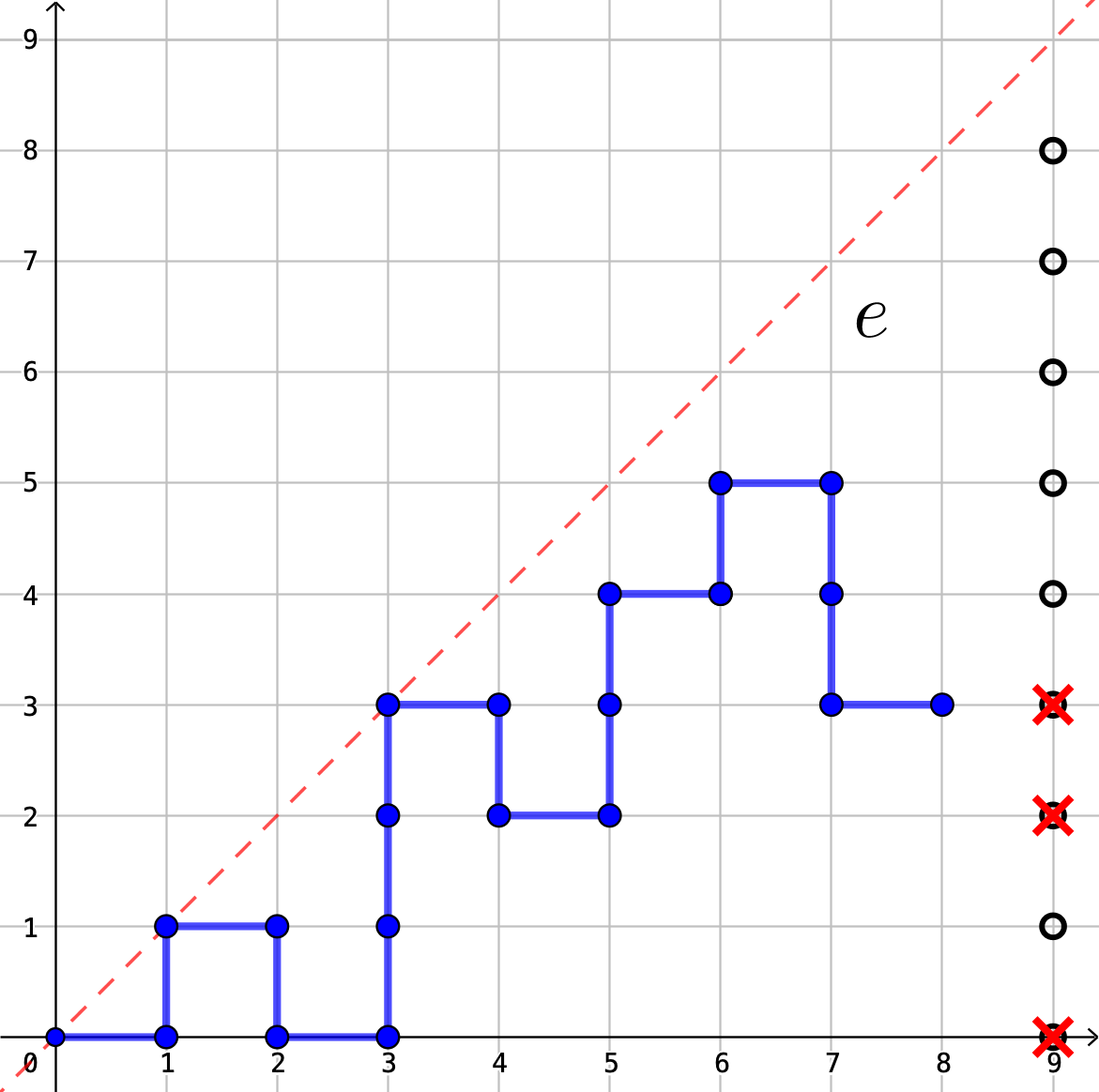}
\end{center}
\caption[Growth of $e={01032453}\in\bI_{8}(\protect\underline{10}0)$ by insertions on the right]{Growth of $e={01032453}\in\bI_{8}(\protect\underline{10}0)$ by insertions on the right. Each site $h$ of $e$ is represented by a circle at $(9,h)$. The active sites are $\{1,4,5,6,7,8\}$, whereas inactive sites $0$, $2$, and $3$ are crossed out.}\label{fig:active_sites}
\end{figure}

Given $e\in\bI_{n}$, we say that $i$ is a {\it descent} of $e$ if $e_{i}>e_{i+1}$, and let
$\Des(e)=\{i\in[n-1]:e_{i}>e_{i+1}\}$ denote its descent set.
Let us show that the active sites of an inversion sequence in $\Inv(\underline{10}0)$ or $\Inv(\underline{10}1)$ are determined by its descents.

\begin{lem}\label{lem:10-0_and_10-1}
The active sites of $e\in\bI_n(\underline{10}0)$ are
$$\{0,1,\dots,n\}\setminus\{e_{i+1}:i\in\Des(e)\}.$$
The active sites of $e\in\bI_n(\underline{10}1)$ are
$$\{0,1,\dots,n\}\setminus\{e_{i}:i\in\Des(e)\}.$$
In particular, $e_n$ is an active site of $e\in\bI_n(\underline{10}1)$.
\end{lem}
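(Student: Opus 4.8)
The plan is to characterize, for each pattern $p\in\{\underline{10}0,\underline{10}1\}$, exactly which sites $h$ fail to produce an inversion sequence in $\bI_{n+1}(p)$ when appended on the right. Since $e$ already avoids $p$, appending $h$ can only create a new occurrence of $p$ that uses the new last entry $h$ in position $n+1$. For $p=\underline{10}0$, such an occurrence must have the form $e_i e_{i+1} h$ with $e_i > e_{i+1} = h$ (the underlined pair $e_i e_{i+1}$ must be adjacent, and $h$ plays the role of the trailing $0$, so $h = e_{i+1} < e_i$); this forces $i\in\Des(e)$ and $h = e_{i+1}$. For $p=\underline{10}1$, an occurrence $e_i e_{i+1} h$ needs $e_i > e_{i+1}$ and $h = e_i$ (the trailing entry equals the top entry), so again $i\in\Des(e)$ but now $h = e_i$. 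I would present both directions explicitly: if $h$ lies in the claimed forbidden set then such an $i$ exists and $eh$ contains $p$; conversely, any occurrence of $p$ in $eh$ using position $n+1$ yields a descent $i$ of $e$ with $h$ equal to $e_{i+1}$ (resp.\ $e_i$).

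The one subtlety to address carefully is whether appending $h$ could create an occurrence of $p$ \emph{not} ending at position $n+1$ — but this is impossible, since any occurrence of a length-$3$ pattern in $eh$ not involving the last position is already an occurrence in $e$, and $e\in\bI_n(p)$. I should also note the trivial constraint that $h$ must satisfy $0\le h\le n$ for $eh\in\bI_{n+1}$ to make sense as an inversion sequence, which is why the active sites are carved out of $\{0,1,\dots,n\}$ rather than all of $\mathbb{Z}_{\ge0}$; since every $e_{i+1}$ (resp.\ $e_i$) with $i\in[n-1]$ automatically lies in $\{0,\dots,n-1\}\subseteq\{0,\dots,n\}$, the forbidden set is genuinely a subset of the candidate sites.

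Finally, for the last sentence: if $e\in\bI_n(\underline{10}1)$ and $n\ge1$, I claim $e_n$ is active, i.e.\ $e_n\notin\{e_i : i\in\Des(e)\}$. Indeed, suppose $e_n = e_i$ for some $i\in\Des(e)$, so $i\le n-1$ and $e_i > e_{i+1}$. Then consider the subsequence $e_i e_{i+1} e_n$: it has $e_i > e_{i+1}$ and $e_n = e_i$, and the pair $e_i e_{i+1}$ is adjacent, so this is an occurrence of $\underline{10}1$ in $e$ (if $i+1 < n$) — contradicting $e\in\bI_n(\underline{10}1)$. If $i+1 = n$ then $e_n = e_i > e_{i+1} = e_n$, absurd. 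Hence $e_n$ is an active site.

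The routine part is the case-checking of which triples can form $p$; the only place demanding a moment's care is confirming that no occurrence of $p$ in $eh$ can avoid the last position, and that appending $h$ cannot create an occurrence using position $n+1$ in a role other than the trailing entry (for a length-$3$ consecutive-suffix pattern the new entry is necessarily last, so this is immediate). I expect no genuine obstacle; the lemma is essentially a direct unwinding of the definition of occurrence together with avoidance of $p$ in $e$.
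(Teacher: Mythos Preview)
Your proposal is correct and follows essentially the same approach as the paper's proof: characterize the forbidden sites as those values $h$ for which appending $h$ creates a new occurrence of the pattern (necessarily using position $n+1$ as the trailing entry), and then handle the final claim by contradiction, observing that $i+1=n$ is impossible and that otherwise $e_ie_{i+1}e_n$ would already be an occurrence of $\underline{10}1$ in $e$. Your write-up is in fact more explicit than the paper's (which leaves implicit the observation that any new occurrence must end at position $n+1$), but the argument is the same.
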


\begin{proof} A value $h\in\{0,1,\dots,n\}$ is an active site of $e\in\bI_n(\underline{10}0)$ if and only if inserting $h$ on the right of $e$ does not create an occurrence of $\underline{10}0$, that is, if there does not exist $i<n$ such that $e_i>e_{i+1}=h$.
Similarly, $h\in\{0,1,\dots,n\}$ is an active site of $e\in\bI_n(\underline{10}1)$ if and only if inserting $h$ on the right of $e$ does not create an occurrence of $\underline{10}1$, that is, if there does not exist $i<n$ such that $h=e_i>e_{i+1}$.

Finally, suppose for the sake of contradiction that $e_n$ is not an active site of $e\in\bI_{n}\left(\underline{10}1\right)$. Then there must exist $i\in\Des(e)$ such that $e_{i}=e_{n}$. Since $e_{n}=e_{i}>e_{i+1}$, we must have $i+1<n$, and so ${e_{i}e_{i+1}e_{n}}$ would be an occurrence of $\underline{10}1$.
\end{proof}

A {\it succession rule} describes a generating tree by identifying its vertices with {\it labels}. It provides a label for the root, and an inductive rule to produce the labels of the children given the label of the parent. For example, assigning to each inversion sequence $e$ the label $(a)$, where $a$ is the number of active sites of $e$, yields the following succession rule for the generating tree for $\Inv$:
\[
  \Omega_{\Inv} = \begin{cases}
      (1), & \\
      (a)\rightsquigarrow & \hspace{-3mm}(a+1)^{a}.
      \end{cases}
\]
This rule means that the root, which is the empty inversion sequence, has label $(1)$, and that every object with label $(a)$ has $a$ children, each with label $(a+1)$. Figure~\ref{fig:figure_trees}(right) shows these labels on the generating tree for $\Inv$.

If two generating trees have the same succession rule, then they have the same number of vertices at level $n$, for each $n$. To prove that $\left|\bI_{n}\left(\underline{10}0\right)\right|=\left|\bI_{n}\left(\underline{10}1\right)\right|$, we will show that $\Inv\left(\underline{10}0\right)$ and $\Inv\left(\underline{10}1\right)$ have generating trees with the same succession rule.

\begin{prop}\label{prop:10-0_and_10-1_1} The class $\Inv(\underline{10}0)$ has a generating tree described by the succession rule
\[
  \Omega_{\Inv\left(\underline{10}0\right)} = \begin{cases}
      (1,0), & \\
      (a,b)\rightsquigarrow & \hspace{-3mm}(a+1,b),(a,b+1),\ldots,(2,b+a-1), \\
        & \hspace{-3mm}(a+b,0),(a+b-1,1),\ldots,(a+1,b-1).
      \end{cases}
\]
\end{prop}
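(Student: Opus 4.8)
The plan is to set up a bijective correspondence between the children of a node $e\in\bI_n(\underline{10}0)$ in the generating tree and the labels listed on the right-hand side of $\Omega_{\Inv(\underline{10}0)}$, where the label of $e$ records the pair $(a,b)$ with $a$ the number of active sites of $e$ and $b$ a second statistic to be chosen so that the rule closes. The natural candidate, suggested by Lemma~\ref{lem:10-0_and_10-1}, is to let $a=|\{0,1,\dots,n\}\setminus\{e_{i+1}:i\in\Des(e)\}|$ be the number of active sites and $b=|\{e_{i+1}:i\in\Des(e)\}|=|\{e_{i+1}:i\in\Des(e)\}|$ be the number of \emph{inactive} sites below $n$; note $a+b=n+1$, so the label really carries one new piece of information beyond the level. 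First I would verify the base case: the empty inversion sequence has a single active site $\{0\}$ and no descents, hence label $(1,0)$.

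The core of the argument is to describe, for $e\in\bI_n(\underline{10}0)$ with label $(a,b)$, the label of each child $eh$. The active sites of $e$, listed in increasing order, are some $0\le s_1<s_2<\dots<s_a\le n$, and the inactive ones are the $b$ values in $I(e)=\{e_{i+1}:i\in\Des(e)\}$. Inserting $h=s_k$ on the right produces $eh\in\bI_{n+1}(\underline{10}0)$, and I must compute $\Des(eh)$ and the inactive set $I(eh)$. The key observation is: if $h\ge e_n$ then no new descent is created at position $n$, the old inactive values remain inactive, the new site $n+1$ becomes active, so the number of inactive sites stays $b$ and the number of active sites becomes $a+1$; sorting the $h=s_k$ by size, these "large" insertions yield labels $(a+1,b),(a,b+1),\dots$ descending in $a$ as $h$ decreases among values $\ge e_n$. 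If instead $h<e_n$, then $n$ becomes a descent of $eh$ with $(eh)_{n+1}=h$, so $h$ itself becomes a newly inactive value (together with possibly re-examining which old inactive values below are still realized as $e_{i+1}$ for a descent $i$); here one must check carefully that inserting $h$ does not remove any previously inactive value and that $h$ was not already inactive — which holds precisely because $h$ was an active site. The count then gives the second block of labels $(a+b,0),(a+b-1,1),\dots,(a+1,b-1)$.

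The main obstacle I anticipate is the bookkeeping in the "small insertion" case $h<e_n$: one must show that the set of inactive values of $eh$ is obtained from that of $e$ by adjoining exactly the single new value $h$, i.e.\ that no old inactive value disappears and that exactly one new inactive value appears. This requires arguing that if $v\in I(e)$, i.e.\ $v=e_{i+1}$ with $e_i>v$ for some $i<n$, then the same pair $(i,i+1)$ still witnesses $v\in I(eh)$ (immediate, since positions $\le n$ are untouched), and conversely that the only descent of $eh$ not already a descent of $e$ is at position $n$, contributing the single value $(eh)_{n+1}=h\notin I(e)$ since $h$ was active. Once this is established, the transition is: the $a$ choices of $h$ split according to $h\ge e_n$ (giving, as $h$ ranges over active sites $\ge e_n$ in decreasing order, labels $(a+1,b),(a,b+1),\dots$ with total count equal to the number of such sites) and $h<e_n$ (giving labels with first coordinate $a+b,a+b-1,\dots$), and a short count matching the number of active sites $\ge e_n$ against $b+a-1-\dots$ confirms the two blocks in the statement line up exactly as written. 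Finally I would note that distinct $h$ give distinct children and that every $\underline{10}0$-avoider of length $n+1$ arises uniquely this way (delete the last entry), so the tree is genuinely the generating tree for $\Inv(\underline{10}0)$ and $\Omega_{\Inv(\underline{10}0)}$ describes it.
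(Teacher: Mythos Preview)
Your choice of label is the problem. You take $(a,b)=(\text{number of active sites},\text{number of inactive sites})$, so that $a+b=n+1$; as you note, this carries only one piece of information beyond the level. But that single statistic is \emph{not} enough to determine the multiset of children's labels. Concretely: with your labeling, every insertion $h\ge e_n$ leaves the inactive set unchanged and adds the new site $n+1$, so \emph{every} such child has label $(a+1,b)$; and every insertion $h<e_n$ adds exactly $h$ to the inactive set while gaining $n+1$, so every such child has label $(a,b+1)$. The split between the two cases depends on how many active sites lie at or above $e_n$, and that number is not recoverable from your pair $(a,b)$. For instance, $e=00$ and $e=01$ both get your label $(3,0)$, but the former has three children all labeled $(4,0)$, while the latter has children labeled $(4,0),(4,0),(3,1)$. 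So your rule does not close.

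The paper's fix is to make the label record precisely the split you need: set
\[
a=\bigl|\{h\text{ active}:h\ge e_n\}\bigr|,\qquad b=\bigl|\{h\text{ active}:h<e_n\}\bigr|.
\]
Now $a+b$ is the total number of active sites (not $n+1$), and the position of $e_n$ among the active sites is built into the label. With this definition, if $h$ is the $i$th smallest element of $A_{\ge}(e)$ then $eh$ has the same active set together with $n+1$, and the new last entry $h$ sits with $a-i+1$ active sites at or above it (plus the new $n+1$) and $b+i-1$ below it, giving label $(a+2-i,\,b-1+i)$; this produces the first row $(a+1,b),(a,b+1),\dots,(2,b+a-1)$ as $i=1,\dots,a$. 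If $h$ is the $i$th smallest element of $B_{<}(e)$ then $h$ itself becomes inactive, the remaining $a+b-1$ old active sites together with $n+1$ are all $\ge h$ except for the $i-1$ active sites strictly below $h$, giving label $(a+b+1-i,\,i-1)$; this produces the second row. Your verification that the inactive set changes by exactly $\{h\}$ in the small-insertion case is correct and is used here, but the crucial missing ingredient is tracking \emph{where the last entry sits among the active sites}, not merely how many active sites there are.
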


\begin{proof}
We construct a generating tree by insertions on the right. To each $e\in\bI_{n}\left(\underline{10}0\right)$, we assign the label $(a,b) = \left(\left|A_{\geq}(e)\right|,\left|B_{<}(e)\right|\right)$, where
\begin{align}\label{eq:def_sets_sites}
\begin{split}
A_{\geq}(e) &= \left\{h:h\textnormal{ is an active site of }e\textnormal{ and }h\geq e_{n}\right\},\\
B_{<}(e) &= \left\{h:h\textnormal{ is an active site of }e\textnormal{ and }h<e_{n}\right\},
\end{split}
\end{align}
with the convention $e_0=0$. The root, which is the empty inversion sequence, has label $(1,0)$.

Suppose that $e\in\bI_{n}\left(\underline{10}0\right)$ has label $(a,b)$, and that we grow $e$ by inserting $h$ on the right, obtaining $eh\in\bI_{n+1}\left(\underline{10}0\right)$. Since $h$ is an active site of $e$, it belongs to either $A_{\geq}(e)$ or $B_{<}(e)$.

Suppose first that $h\in A_{\geq}(e)$, and that $h$ is the $i$th smallest element in $A_{\geq}(e)$. Since $n$ is not a descent of $eh$, all the active sites of $e$ are also active in $eh$, by Lemma~\ref{lem:10-0_and_10-1}, and there is an additional active site $n+1$. Thus, $eh$ has label
\[
((a-i+1)+1, b+(i-1)) = (a+2-i,b-1+i),
\]
as illustrated in Figure~\ref{fig:succession}(top).
As $i$ ranges from $1$ to $a$, the resulting inversion sequences $eh\in \bI_{n+1}\left(\underline{10}0\right)$ where $h\in A_{\geq}(e)$ have labels
\[
(a+1,b),(a,b+1),\ldots,(2,b+a-1).
\]

\begin{figure}[htb]
\begin{center}
\includegraphics[scale=0.52]{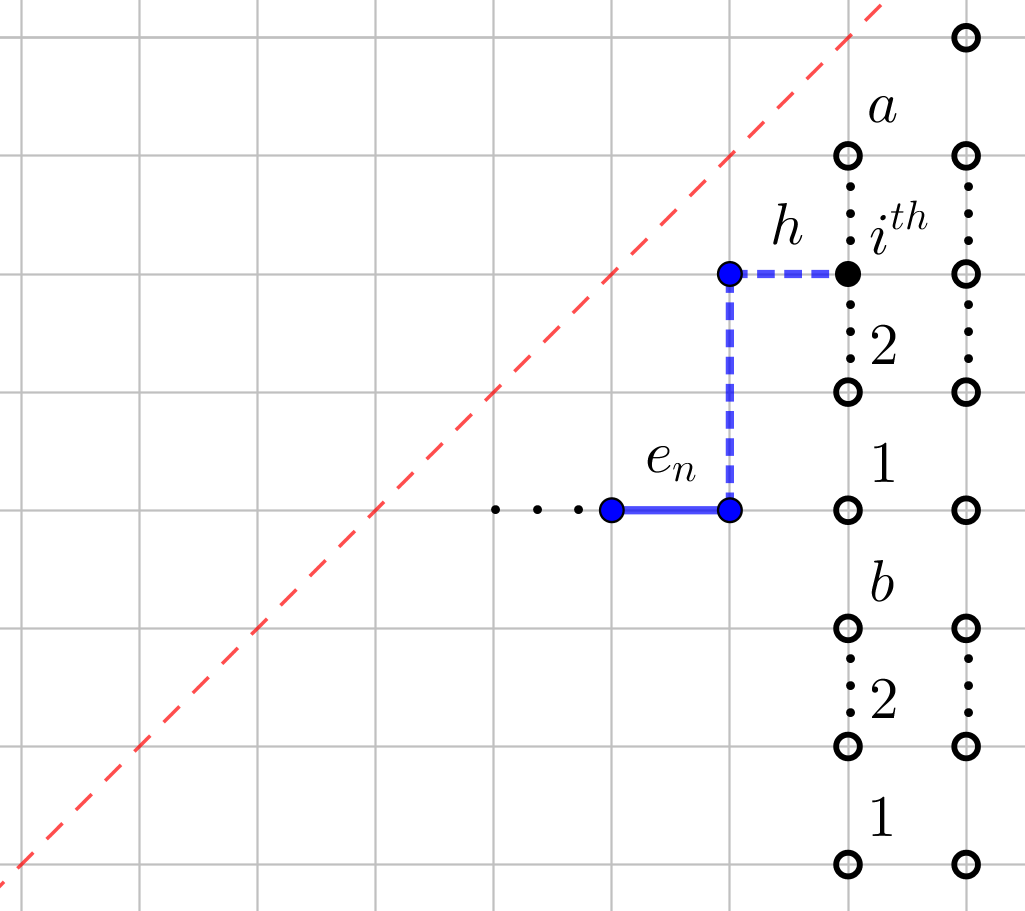}
\hspace*{1.5cm}
\includegraphics[scale=0.52]{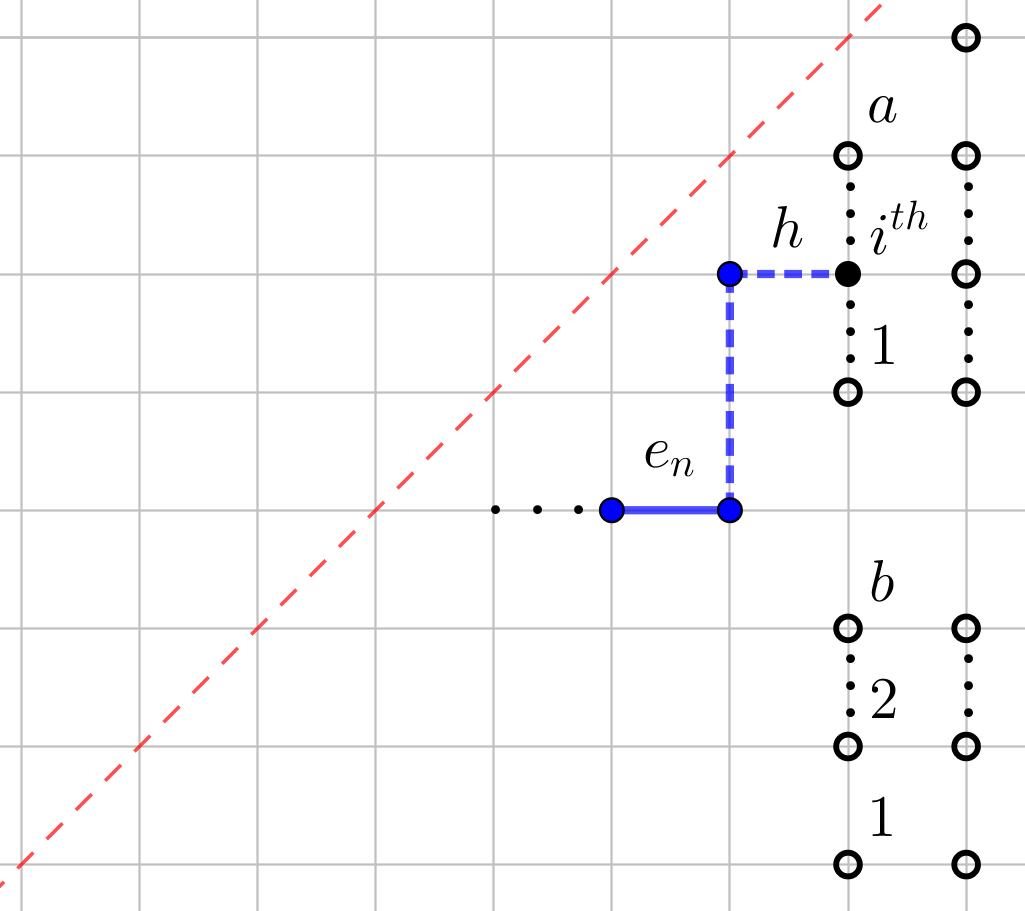}
\bigskip

\includegraphics[scale=0.52]{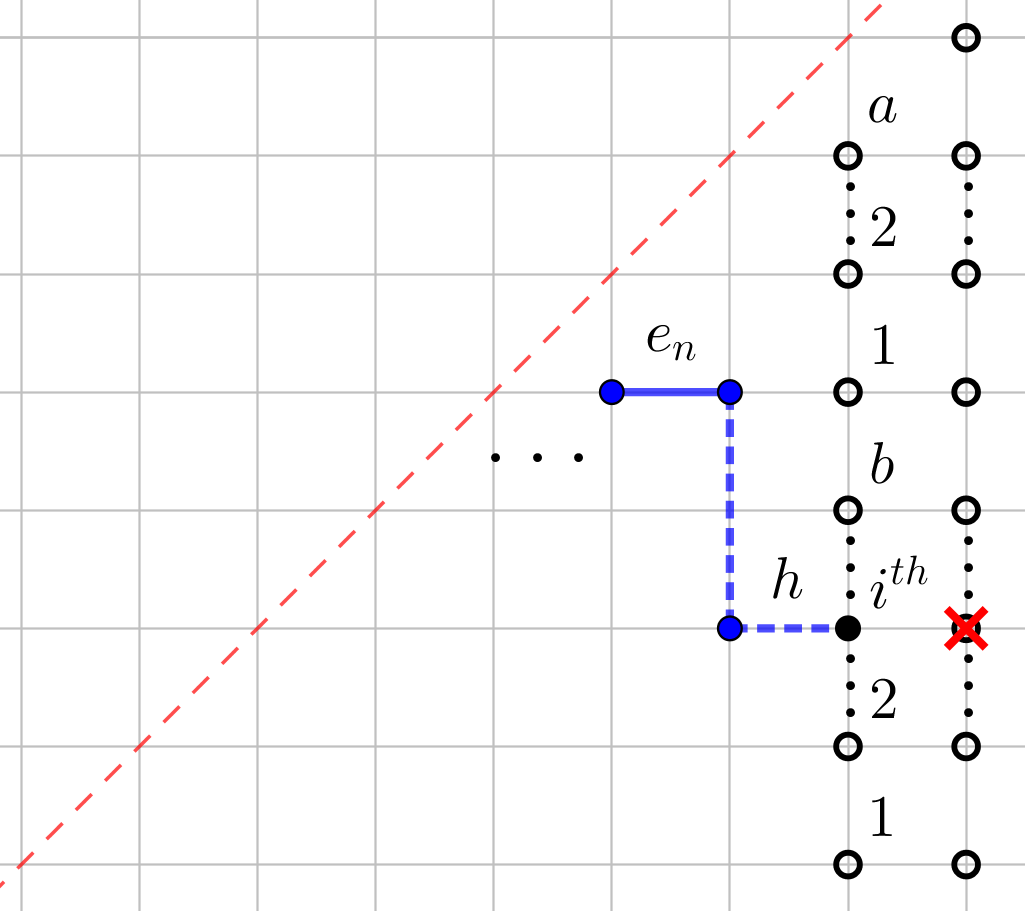}
\hspace*{1.5cm}
\includegraphics[scale=0.52]{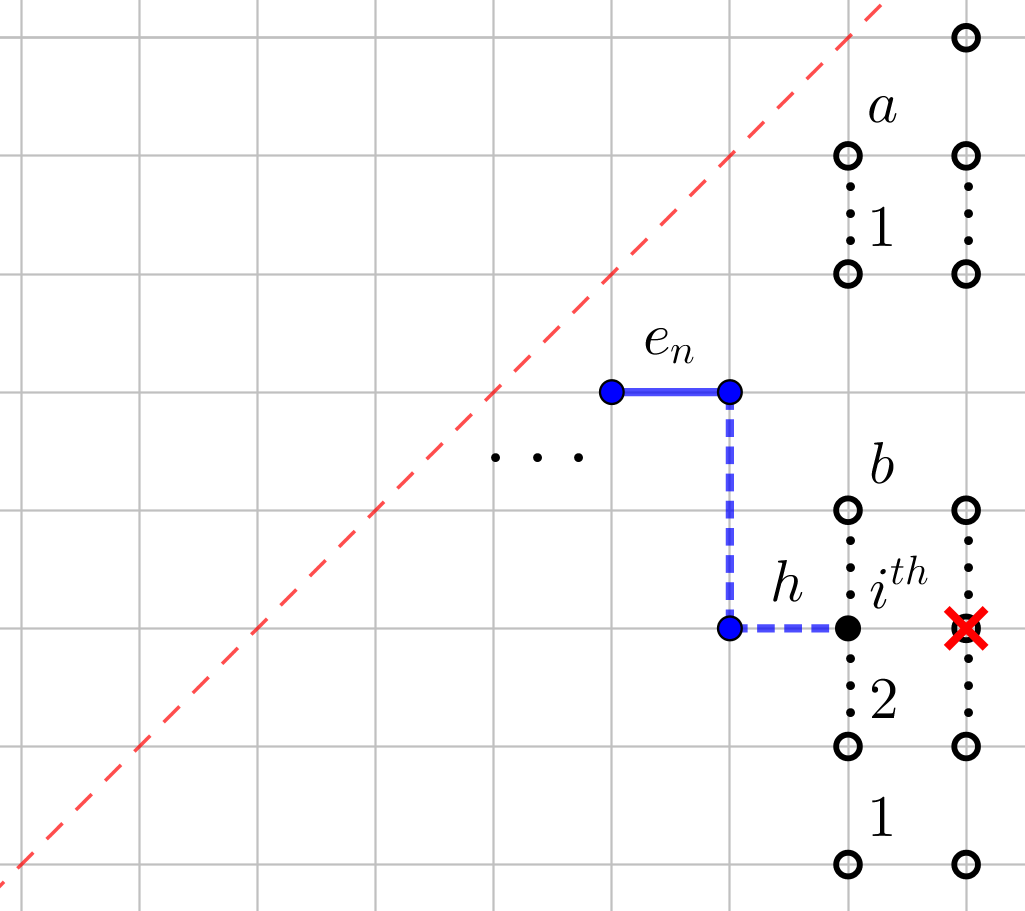}\par
\end{center}
\caption[Growth of $e\in\bI_{n}(\protect\underline{10}0)$]{Growth of $e\in\bI_{n}(\protect\underline{10}0)$ when the $i$th smallest active site in $A_{\geq}(e)$ (top) or $B_{<}(e)$ (bottom) is chosen. The site $e_{n}$ may be active (left) or not (right), but this does not affect the label of $eh$.}
\label{fig:succession}
\end{figure}

Suppose now that $h\in B_{<}(e)$, and that $h$ is the $i$th smallest element in $B_{<}(e)$.
Then $n$ is a descent of $eh$, so Lemma~\ref{lem:10-0_and_10-1} implies that $h$ is not an active site of $eh$. With the additional active site $n+1$, the inversion sequence $eh$ has label
\[
((b-i)+a+1, i-1) = (a+b+1-i,i-1),
\]
as shown in Figure~\ref{fig:succession}(bottom). As $i$ ranges from $1$ to $b$, the resulting inversion sequences $eh\in \bI_{n+1}\left(\underline{10}0\right)$ where $h\in B_{<}(e)$ have labels
\[
(a+b,0),(a+b-1,1),\ldots,(a+1,b-1).\qedhere
\]
\end{proof}

Next we find a generating tree for $\Inv\left(\underline{10}1\right)$ that is isomorphic to the one described in Proposition~\ref{prop:10-0_and_10-1_1}.

\begin{prop}\label{prop:10-0_and_10-1_2} The class $\Inv(\underline{10}1)$ has a generating tree described by the succession rule
\[
  \Omega_{\Inv\left(\underline{10}1\right)} = \begin{cases}
      (1,0), & \\
      (a,b)\rightsquigarrow & \hspace{-3mm}(a+1,b),(a,b+1),\ldots,(2,b+a-1), \\
       & \hspace{-3mm}(a+b,0),(a+b-1,1),\ldots,(a+1,b-1).
      \end{cases}
\]
\end{prop}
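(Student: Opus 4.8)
The plan is to mirror the proof of Proposition~\ref{prop:10-0_and_10-1_1}, assigning to each $e\in\bI_n(\underline{10}1)$ a label $(a,b)$ defined in terms of its active sites, and then verifying that the growth by insertions on the right produces exactly the children prescribed by $\Omega_{\Inv(\underline{10}1)}$, which is the same rule as $\Omega_{\Inv(\underline{10}0)}$. By Lemma~\ref{lem:10-0_and_10-1}, the active sites of $e\in\bI_n(\underline{10}1)$ are $\{0,1,\dots,n\}\setminus\{e_i:i\in\Des(e)\}$, and in particular $e_n$ is always active. This last fact is the key structural difference from the $\underline{10}0$ case (where $e_n$ need not be active, but $e_n$ plays the role of a threshold), and it suggests defining
\[
A_{\geq}(e)=\{h:h\text{ is an active site of }e\text{ and }h\geq e_n\},\qquad
B_{<}(e)=\{h:h\text{ is an active site of }e\text{ and }h<e_n\},
\]
with the convention $e_0=0$, and setting the label of $e$ to be $(|A_{\geq}(e)|,|B_{<}(e)|)$. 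The root (empty sequence) gets label $(1,0)$ since its unique active site $0$ equals $e_0=0$ and hence lies in $A_{\geq}$.

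Next I would analyze the growth $e\mapsto eh$ for $h$ an active site of $e$, splitting into the cases $h\in A_{\geq}(e)$ and $h\in B_{<}(e)$, as in Figure~\ref{fig:succession}. Suppose $e$ has label $(a,b)$. If $h\in A_{\geq}(e)$ is the $i$th smallest element of $A_{\geq}(e)$, then $h\geq e_n$, so $n$ is not a descent of $eh$; by Lemma~\ref{lem:10-0_and_10-1} no previously active site is deactivated (the deactivated sites are the values $e_j$ with $j$ a descent of $eh$, and this descent set coincides with $\Des(e)$), and the new site $n+1$ becomes active. Among the $a$ sites of $A_{\geq}(e)$, the ones $\geq h$, namely $a-i+1$ of them, together with $n+1$, now lie at or above the new last entry $h$; the $i-1$ sites of $A_{\geq}(e)$ below $h$ join the $b$ sites of $B_{<}(e)$ below $h$. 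So $eh$ has label $(a-i+2,\,b+i-1)$, and as $i$ runs from $1$ to $a$ these are exactly $(a+1,b),(a,b+1),\dots,(2,b+a-1)$. If instead $h\in B_{<}(e)$ is the $i$th smallest such element, then $h<e_n$, so $n$ becomes a descent of $eh$ and, by Lemma~\ref{lem:10-0_and_10-1}, the value $e_n=e_n$ at position $n$ is precisely the one that gets deactivated — that is, exactly one old active site, $e_n$ itself, is removed, while all others survive and $n+1$ is added. Counting: the new last entry is $h$, the $b-i$ sites of $B_{<}(e)$ above $h$ plus $n+1$ and all of $A_{\geq}(e)$ minus the removed site $e_n$ give $a+b-i+1$...

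Here I need to be careful with the bookkeeping, and this is the step I expect to be the main obstacle: tracking which single site is deactivated and showing the counts land on $(a+b+1-i,\,i-1)$ for $i=1,\dots,b$, matching $(a+b,0),\dots,(a+1,b-1)$. The subtlety is that in $\Inv(\underline{10}1)$ it is the larger entry of a descent pair whose value is forbidden, so inserting $h<e_n$ deactivates the site equal to $e_n$; I must confirm $e_n$ was indeed active in $e$ (it is, by Lemma~\ref{lem:10-0_and_10-1}) and that it is distinct from the surviving sites, so that exactly one active site is lost. I would also check the edge cases where $e_n$ equals some other active value or where $b=0$ or $a=1$, and verify that each object of $\Inv(\underline{10}1)$ arises exactly once (automatic, since insertion on the right is reversible by deleting the last entry). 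Once the two case analyses are complete, the succession rule for $\Inv(\underline{10}1)$ is seen to be identical to $\Omega_{\Inv(\underline{10}0)}$ from Proposition~\ref{prop:10-0_and_10-1_1}, the two generating trees are isomorphic, hence $|\bI_n(\underline{10}0)|=|\bI_n(\underline{10}1)|$ for all $n$, proving Theorem~\ref{EquivVinc}(ii).
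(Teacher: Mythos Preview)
Your proposal is correct and follows exactly the paper's approach: the same label $(|A_{\geq}(e)|,|B_{<}(e)|)$, the same case split according to whether $h\ge e_n$ or $h<e_n$, and the same crucial use of Lemma~\ref{lem:10-0_and_10-1} that $e_n$ is always active in $\Inv(\underline{10}1)$, so that inserting $h<e_n$ deactivates exactly the single site $e_n$. The one place your sketch wobbles is the count in the $h\in B_{<}(e)$ case: you listed the $b-i$ elements of $B_{<}(e)$ strictly above $h$, plus $n+1$, plus the $a-1$ surviving elements of $A_{\geq}(e)$, which sums to $a+b-i$; you omitted $h$ itself, which is active (it was active in $e$ and is not equal to $e_n$) and satisfies $h\ge h$, giving the correct $|A_{\geq}(eh)|=a+b+1-i$ and $|B_{<}(eh)|=i-1$.
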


\begin{proof} As in the proof of Proposition~\ref{prop:10-0_and_10-1_1}, we assign, to each $e\in\bI_{n}\left(\underline{10}1\right)$, the label $(a,b) = \left(\left|A_{\geq}(e)\right|,\left|B_{<}(e)\right|\right)$, where $A_{\geq}(e)$ and $B_{<}(e)$ are as in Equation~\eqref{eq:def_sets_sites}. With the convention $e_0=0$, the root again has label $(1,0)$.

Suppose that $e\in\bI_{n}\left(\underline{10}1\right)$ has label $(a,b)$, and that we grow $e$ by inserting $h$ on the right. If the chosen active site $h$ is in $A_{\geq}(e)$, then all the active sites of $e$ are also active in $eh$, by Lemma~\ref{lem:10-0_and_10-1}, and we deduce that the resulting inversion sequences in $\bI_{n+1}\left(\underline{10}1\right)$ have labels
\[
(a+1,b),(a,b+1),\ldots,(2,b+a-1).
\]
The visual representation corresponds again to Figure~\ref{fig:succession}(top left), since $e_{n}$ is an active site of $e$ by Lemma~\ref{lem:10-0_and_10-1}.

Suppose now that $h\in B_{<}(e)$, and that $h$ is the $i$th smallest element in $B_{<}(e)$.
Since $n$ is a descent of $eh$, Lemma~\ref{lem:10-0_and_10-1} implies that $e_{n}$ is not an active site of $eh$, and also that $e_{n}$ was an active site of $e$. In this case, $eh$ has $(a-1)+1+(b-i)+1$ active sites $h'$ such that $h\leq h'$, namely the $a-1$ sites such that $e_{n}\leq h'\leq n$, the site $n+1$, and the $(b-i)+1$ sites such that $h\leq h'< e_{n}$. Hence, $eh$ has label $(a+b+1-i,i-1)$, see Figure~\ref{fig:succession_3}.
As $i$ ranges from $1$ to $b$, the resulting inversion sequences $eh\in \bI_{n+1}\left(\underline{10}1\right)$, where $h\in B_{<}(e)$, have labels
\[
(a+b,0),(a+b-1,1),\ldots,(a+1,b-1).\qedhere
\]
\end{proof}

\begin{figure}[htp]
\begin{center}
	\includegraphics[scale=0.52]{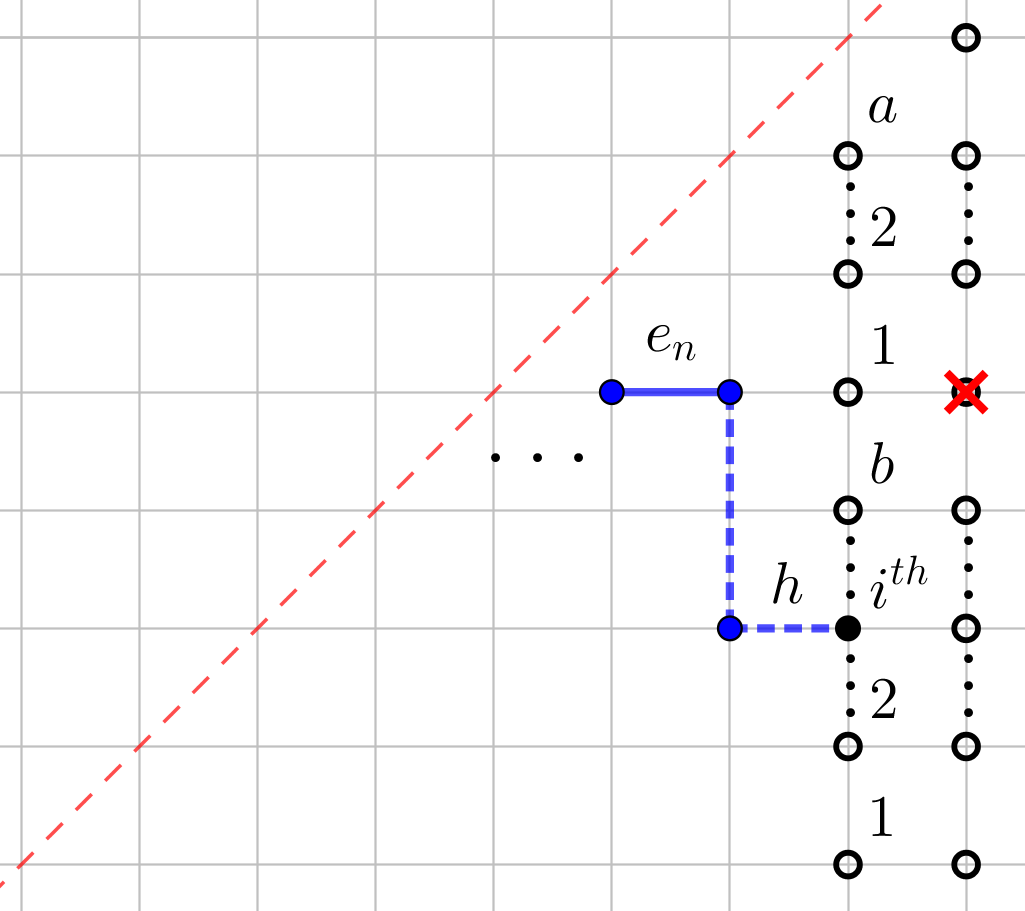}
\end{center}
\caption[Growth of $e\in\bI_{n}(\protect\underline{10}1)$]{Growth of $e\in\bI_{n}(\protect\underline{10}1)$ when the $i$th smallest active site in $B_{<}(e)$ is chosen.}\label{fig:succession_3}
\end{figure}

Since the generating trees for $\Inv\left(\underline{10}0\right)$ and $\Inv\left(\underline{10}1\right)$ described in Propositions~\ref{prop:10-0_and_10-1_1} and~\ref{prop:10-0_and_10-1_2}, respectively, are isomorphic, the next result follows.

\begin{cor}\label{cor:10_0_and_10_1} The patterns $\underline{10}0$ and $\underline{10}1$ are Wilf equivalent.
\end{cor}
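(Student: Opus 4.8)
The plan is to read off the corollary directly from Propositions~\ref{prop:10-0_and_10-1_1} and~\ref{prop:10-0_and_10-1_2}. First I would observe that the succession rules $\Omega_{\Inv(\underline{10}0)}$ and $\Omega_{\Inv(\underline{10}1)}$ exhibited there are word-for-word identical: both have root label $(1,0)$, and both have the same production rule sending $(a,b)$ to the list of labels $(a+1,b),(a,b+1),\dots,(2,b+a-1),(a+b,0),(a+b-1,1),\dots,(a+1,b-1)$. Consequently the generating tree for $\Inv(\underline{10}0)$ and the generating tree for $\Inv(\underline{10}1)$ are isomorphic as rooted trees, and in particular have the same number of vertices at every level~$n$.

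The second and final step is the standard observation that, for a generating tree built by insertions on the right, the vertices at level~$n$ of the tree for a class $\Inv(p)$ are precisely the inversion sequences in $\bI_n(p)$, each appearing exactly once; this is exactly the property that makes the construction a valid generating tree. Hence the common level-$n$ vertex count of the two trees equals both $\left|\bI_n(\underline{10}0)\right|$ and $\left|\bI_n(\underline{10}1)\right|$, so these agree for every $n\ge0$, which is the definition of $\underline{10}0\sim\underline{10}1$.

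I do not expect any genuine obstacle in the corollary itself, since all the substantive work has already been done: first in Lemma~\ref{lem:10-0_and_10-1}, which characterizes the active sites of an inversion sequence avoiding $\underline{10}0$ or $\underline{10}1$ in terms of its descent set, and then in the two propositions, where the delicate part is tracking how the counts $\left|A_{\geq}(e)\right|$ and $\left|B_{<}(e)\right|$ change when the $i$th smallest active site above or below $e_n$ is appended, and verifying that the two patterns, despite deactivating different sites, produce the same label updates. If one wanted more than mere Wilf equivalence, the tree isomorphism could be promoted to an explicit size-preserving bijection $\bI_n(\underline{10}0)\to\bI_n(\underline{10}1)$ by matching vertices level by level along the two trees, but this refinement is not needed here.
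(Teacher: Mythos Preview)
Your proposal is correct and matches the paper's approach exactly: the paper simply notes that the generating trees from Propositions~\ref{prop:10-0_and_10-1_1} and~\ref{prop:10-0_and_10-1_2} are isomorphic (having identical succession rules), and concludes the Wilf equivalence from the equality of level sizes. Your additional remarks about where the real work lies and about promoting the isomorphism to an explicit bijection are accurate and appropriate context, though the paper states the corollary without any explicit proof text.
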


We remark that these two patterns are not strongly Wilf equivalent. For instance, the are 134 inversion sequences of length 6 containing exactly one occurrence of $\underline{10}0$, but only 132 containing exactly one occurrence of $\underline{10}1$.

To end this subsection, we use the generating trees from Propositions~\ref{prop:10-0_and_10-1_1} and~\ref{prop:10-0_and_10-1_2} to provide an expression for the generating function
$$A(z)=\sum_{n\ge0} \left|\bI_n\left(\underline{10}0\right)\right| z^n =\sum_{n\ge0} \left|\bI_n\left(\underline{10}1\right)\right| z^n.$$

\begin{prop}\label{prop:GF}
We have that $A(z)=G(1,z)$, where $G(u,z)$ is defined recursively by
\begin{equation}\label{eq:G}
G(u,z)=u(1-u)+uG(u(1+z-uz),z).
\end{equation}
\end{prop}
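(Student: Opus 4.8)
The plan is to encode the information carried by the succession rule $\Omega_{\Inv(\underline{10}0)}$ (equivalently $\Omega_{\Inv(\underline{10}1)}$) into a bivariate generating function, where the variable $z$ marks the level (i.e., the length of the inversion sequence) and the variable $u$ marks the first coordinate $a$ of a node's label. So I would set
\[
G(u,z)=\sum_{e} u^{a(e)}\, z^{|e|},
\]
where the sum is over all nodes of the generating tree (all inversion sequences avoiding $\underline{10}0$), $|e|$ is the length of $e$, and $a(e)$ is the first component of the label of $e$. Since specializing $u=1$ just counts nodes by level, we immediately get $A(z)=G(1,z)$, which is the asserted identity once we verify that this $G$ satisfies the functional equation~\eqref{eq:G}. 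Note that the second label component $b$ does not need its own variable: in the succession rule, the children's labels depend on $a$ and $b$ only through $a$ and $a+b$, and summing a child's contribution over the full range of $i$ produces a quantity that, as we will see, depends only on $a$; this is precisely why a single catalytic variable $u$ suffices.

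The core computation is to translate the succession rule into a recursive relation for $G$. The root contributes $u\cdot z^0 = u$, matching (part of) the term $u(1-u)$ after we account for it below. For the inductive part, a node with label $(a,b)$ at level $n$ contributes, to level $n+1$, children with first coordinates
\[
a+1,\ a,\ a-1,\ \dots,\ 2 \quad(\text{the }A_\ge\text{ part, }a\text{ children}),\qquad
a+b,\ a+b-1,\ \dots,\ a+1\quad(\text{the }B_<\text{ part, }b\text{ children}).
\]
Thus the generating-function contribution of this node's children is
\[
z\Bigl(\sum_{j=2}^{a+1}u^{j} \;+\; \sum_{j=a+1}^{a+b}u^{j}\Bigr)
= z\Bigl(\tfrac{u^{2}(u^{a}-1)}{u-1} + \tfrac{u^{a+1}(u^{b}-1)}{u-1}\Bigr).
\]
I would then sum this over all nodes. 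The first sum, $\sum_e z\,\tfrac{u^{2}(u^{a(e)}-1)}{u-1}\,z^{|e|}$, evaluates to $\tfrac{z}{u-1}\bigl(u^{2}G(u,z)-u^{2}G(1,z)\bigr)$, and here is the subtlety: the term $\sum_e u^{a(e)+b(e)}z^{|e|}$ arising from the second sum is not directly expressible in $G(u,z)$, since $G$ does not track $b$. The resolution is that the quantity $a(e)+b(e)$ equals the total number of active sites of $e$, which by the construction of the generating tree is exactly $1$ more than the number of children of $e$ — and more importantly, in the recursion one shows that tracking $a+b$ is not needed because the generating function of all the children-contributions, reorganized by the child's own label, collapses. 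Concretely, I would argue that the combined contribution of all children of all nodes, when re-indexed by the children themselves, must equal $G(u,z)-u$ (every non-root node is someone's child, counted once). Matching these two expressions for $\sum_{\text{children}}$ yields the functional equation.

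Carrying this out: the cleanest route is to observe that, for a node with label $(a,b)$, the list of first-coordinates of its children is, after reordering, exactly $2,3,\dots,a+b$ together with an extra copy structure — in fact the multiset of child labels' first coordinates is $\{2,3,\dots,a+1\}\cup\{a+1,\dots,a+b\}$, whose generating polynomial in $u$ is $\tfrac{u(u-u^{a+b})}{1-u}+\bigl(u^{a+1}-u\bigr)$ after simplification. Summing $z$ times this over all nodes and using $\sum_e u^{a(e)}z^{|e|}=G(u,z)$, the terms involving $u^{a(e)+b(e)}$ must be eliminated; the mechanism is that $a(e)+b(e)$ governs the number of children but the generating function is set up so that substituting $u\mapsto u(1+z-uz)$ exactly absorbs the level-increment $z$ together with the reweighting. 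I would verify by direct expansion that
\[
u(1-u)+uG\bigl(u(1+z-uz),z\bigr)
\]
reproduces $u$ (the root) plus $z$ times the sum over nodes of their children's weights, i.e., equals $G(u,z)$. The main obstacle, and the step deserving the most care, is precisely this bookkeeping: showing that the apparently necessary second catalytic variable for $b$ is redundant because the composition $u\mapsto u(1+z-uz)$ encodes both the passage to the next level and the correct relabeling. Once that substitution is checked to match the succession rule term-by-term, setting $u=1$ in~\eqref{eq:G} gives $A(z)=G(1,z)=1-1+G(1+z-z,z)\cdot 1$, consistently, and the proposition follows.
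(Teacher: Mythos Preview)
Your proposal has a genuine gap, and in fact your chosen generating function does not satisfy the claimed equation.

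You define $G(u,z)=\sum_e u^{a(e)}z^{|e|}$, tracking only the first label coordinate $a$. But the $G$ in the proposition is \emph{not} this series; in the paper's derivation $G(u,z)=F(u,u,z)$, where $F(u,v,z)$ is trivariate with $u^av^bz^n$ recording the full label $(a,b)$. Thus the exponent of $u$ in $G$ is $a+b$, the total number of active sites, not $a$ alone. One can check directly that your series does not obey~\eqref{eq:G}: at level~$2$ the labels are $(3,0)$ and $(2,1)$, so your $G$ has $[z^2]G=u^3+u^2$, whereas substituting your level-$0$ and level-$1$ terms into the right-hand side of~\eqref{eq:G} gives $[z^2]=3u^3-u^4$.

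The underlying reason is the one you yourself flag and then wave away: from a node with label $(a,b)$ the multiset of children's first coordinates is $\{2,\dots,a+1\}\cup\{a+1,\dots,a+b\}$, whose $u$-polynomial involves $u^{a+b}$. This term cannot be expressed through a series that records only $a$, so no closed recursion in your $G$ is available. Your claim that ``the substitution $u\mapsto u(1+z-uz)$ encodes both the passage to the next level and the correct relabeling'' is asserted, not verified, and (as the computation above shows) it is false for your $G$.

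The paper's route is exactly to introduce the second catalytic variable: write the full functional equation for $F(u,v,z)$ coming from the succession rule, collect the $F(u,v)$ terms to obtain a kernel $\frac{u-v-(u-1)uz}{u}$, and cancel it by the substitution $v=u(1+z-uz)$. What survives is the stated recursion for $G(u)=F(u,u)$. The substitution $u\mapsto u(1+z-uz)$ thus arises from the kernel method, not from any one-variable bookkeeping trick; you need both catalytic variables at the start to discover it.
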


\begin{proof}
Let $F(u,v,z)$ be the generating function where the coefficient of $u^av^bz^n$ is the number of vertices with label $(a,b)$ at level $n$ of the generating tree with succession rule $\Omega_{\Inv\left(\underline{10}0\right)}$. Note that $A(z)=F(1,1,z)$.
Each term $u^av^b$ corresponding to a label $(a,b)$ at level $n$ of the tree generates a contribution
\begin{align*}&\left(u^{a+1}v^b+u^av^{b+1}+\dots+u^2v^{b+a-1}\right)+\left(u^{a+b}+u^{a+b-1}v+\dots+u^{a+1}v^{b-1}\right)\\
&=\frac{u}{u-v}\left(u^{a+1}v^b-uv^{a+b}+u^{a+b}-u^av^b\right)
\end{align*}
at level $n+1$. This translates into a functional equation for $F(u,v):=F(u,v,z)$, namely
$$F(u,v)=u+\frac{uz}{u-v}\left(uF(u,v)-uF(v,v)+F(u,u)-F(u,v)\right).$$
Letting $G(u)=F(u,u)$ and collecting all the terms with $F(u,v)$ on the left hand side, we get
$$\frac{u-v-(u-1)uz}{u}\, F(u,v)=u-v+zG(u)-uzG(v).$$
The kernel of this equation is canceled by setting $v=u(1+z-uz)$, which gives
$$0=u-u(1+z-uz)+zG(u)-uzG(u(1+z-uz)),$$ or equivalently,
\[
G(u)=u(1-u)+uG(u(1+z-uz)).
\qedhere
\]
\end{proof}

Equation~\eqref{eq:G} can be used to compute the expansion of $G(u,z)$ as a series in the variable $u$. Defining $V^i:=V^i(u,z)$  recursively by $V^0(u,z)=u$ and $V^i(u,z)=V^{i-1}(u(1+z-uz),z)$ for $i\ge1$, we obtain
\begin{align*}G(u,z)&=\sum_{k\ge0} V^0 V^1 \cdots V^k (1-V^k)=u+\sum_{k\ge0} V^0 V^1 \cdots V^k (V^{k+1}-V^k)\\
&=u + z u^2 + (z + 2)z^2u^3 + (z^3 + 5z^2 + 9z+ 5)z^3 u^4 \\
& \quad + (z^6 + 9z^5 + 35z^4 + 75z^3 + 92z^2 + 59z + 14)z^4u^5
+\cdots
\end{align*}
In fact, if follows from Lemma~\ref{lem:10-0_and_10-1} that if a vertex at level $n$ has $k=a+b$ active sites, then $k-1\le n\le\binom{k}{2}$, and so any the exponents of any term $u^kz^n$ with nonzero coefficient in $G(u,z)$ must satisfy this constraint. In particular, the first $k$ terms of the expansion of $G(u,z)$ as a series in $u$ contain the first $k-1$ terms of its expansion as a series in $z$:
$$G(u,z)=u + u^2z + 2u^3z^2 + (5u^4 + u^3)z^3 + (14u^5 + 9u^4)z^4+\cdots$$

\subsection{The patterns $\protect\underline{01}0$ and $\protect\underline{01}1$}

Next we prove Theorem~\ref{EquivVinc}(i). Using ideas similar to those in the previous subsection, we will construct isomorphic generating trees for $\Inv\left(\underline{01}0\right)$ and $\Inv\left(\underline{01}1\right)$ by insertions on the right.
The following lemma is analogous to Lemma~\ref{lem:10-0_and_10-1}, with ascents playing the role of descents. Given $e\in\bI_{n}$, we say that $i$ is an {\it ascent} of $e$ if $e_{i}<e_{i+1}$, and let $\Asc(e)=\{i\in[n-1]:e_{i}<e_{i+1}\}$.

\begin{lem}\label{lem:01-0_and_01-1}
The active sites of $e\in\bI_n(\underline{01}1)$ are
$$\{0,1,\dots,n\}\setminus\{e_{i+1}:i\in\Asc(e)\}.$$
The active sites of $e\in\bI_n(\underline{01}0)$ are
$$\{0,1,\dots,n\}\setminus\{e_{i}:i\in\Asc(e)\}.$$
In particular, $e_n$ is an active site of $e\in\bI_n(\underline{01}0)$.
\end{lem}

\begin{proof} A value $h\in\{0,1,\dots,n\}$ is an active site of $e\in\bI_n(\underline{01}1)$ if and only if there does not exist $i<n$ such that $e_i<e_{i+1}=h$, and it is an active site of $e\in\bI_n(\underline{01}0)$ if and only if there does not exist $i<n$ such that $h=e_i<e_{i+1}$.

For the last statement, note that if $e_n$ was not an active site of $e\in\bI_{n}\left(\underline{01}0\right)$, there would exist $i\in\Asc(e)$ such that $e_{i}=e_{n}$, but then ${e_{i}e_{i+1}e_{n}}$ would be an occurrence of $\underline{01}0$, which is a contradiction.
\end{proof}

\begin{prop}\label{prop:01-0_and_01-1_1} The class $\Inv(\underline{01}1)$ has a generating tree described by the succession rule
\[
  \Omega_{\Inv\left(\underline{01}1\right)} = \begin{cases}
      (0,1), & \\
      (a,b)\rightsquigarrow & \hspace{-3mm}(a,b),(a-1,b+1),\ldots,(1,b+a-1), \\
        & \hspace{-3mm}(a+b,1),(a+b-1,2),\ldots,(a+1,b).
      \end{cases}
\]
\end{prop}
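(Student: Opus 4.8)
The plan is to mimic the proof of Proposition~\ref{prop:10-0_and_10-1_1} almost verbatim, with ascents in place of descents. To each $e\in\bI_n(\underline{01}1)$ we assign the label $(a,b)=(|A_{\geq}(e)|,|B_{<}(e)|)$, where $A_{\geq}(e)$ and $B_{<}(e)$ are defined exactly as in Equation~\eqref{eq:def_sets_sites} (active sites that are $\geq e_n$, respectively $<e_n$), with the convention $e_0=0$. The empty inversion sequence has $e_1\in\{0\}$ as its only active site, and $e_0=0$, so its sole active site lies in $A_{\geq}$ and its label is $(0,1)$. Wait—more carefully: the empty inversion sequence has one active site $0$, and with $e_0=0$ we compare $0\geq e_0=0$, placing it in $A_{\geq}$, giving label $(1,0)$, not $(0,1)$. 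I will instead set the convention so that the unique active site $0$ of the empty sequence is counted in $B_{<}$, which matches the stated root $(0,1)$; concretely, for $\Inv(\underline{01}1)$ one takes $B_{<}(e)=\{h:h\text{ active},\ h\le e_n\}$ and $A_{\geq}(e)=\{h:h\text{ active},\ h>e_n\}$, and verifies the empty sequence gets $(0,1)$.

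Next I analyze the growth $e\mapsto eh$ for $eh\in\bI_{n+1}(\underline{01}1)$. By Lemma~\ref{lem:01-0_and_01-1}, the active sites of $e$ are $\{0,\dots,n\}\setminus\{e_{i+1}:i\in\Asc(e)\}$; inserting $h$ on the right, a previously active site $h'$ remains active in $eh$ unless $n$ becomes an ascent of $eh$, i.e.\ unless $e_n<h$, in which case exactly the value $h$ is removed from the active set. In all cases the new site $n+1$ becomes active in $eh$ (it exceeds $e_{n+1}=h$ only if $h\le n$; one checks $n+1$ is always active). So: if the chosen site $h$ lies in $B_{<}(e)$ (so $h\le e_n$), then $n$ is not an ascent, all old active sites survive, and $n+1$ is a new active site; counting how many old active sites are $\le h=e_{n+1}$ versus $>$ gives the label of $eh$. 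If $h$ is the $i$th smallest element of $B_{<}(e)$, the sites $<h$ among the old active ones number $i-1$, all lying below $e_{n+1}=h$, plus $h$ itself; the remaining $a+b-i$ old sites exceed $h$, and $n+1$ is added above, yielding label $(a+b+1-i,\,i)$. As $i$ runs from $1$ to $b$ this produces $(a+b,1),(a+b-1,2),\dots,(a+1,b)$. If instead $h\in A_{\geq}(e)$ (so $h>e_n$), then $n$ is an ascent of $eh$, the value $h$ is deactivated, but $n+1$ is added; if $h$ is the $i$th smallest of $A_{\geq}(e)$, then $i-1$ old active sites lie strictly between $e_n$ and $h$ and survive above $e_{n+1}=h$... here one must track carefully that $e_{n+1}=h$ and the new site $n+1>h$, so the count of active sites $\ge e_{n+1}$ in $eh$ is (the $a-i$ old sites above $h$) plus ($n+1$) plus possibly $e_{n+1}$ itself if active—but $h$ was just deactivated—giving $a-i+1$, while those below number $b+(i-1)$; hence label $(a-i+1,\,b+i-1)$. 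Wait, this should read $(a+1-i, b+i-1)$, and as $i$ runs from $1$ to $a$ we get $(a,b),(a-1,b+1),\dots,(1,b+a-1)$, matching $\Omega_{\Inv(\underline{01}1)}$.

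Finally I confirm the generating tree is genuinely described by this rule: each $e\in\bI_{n+1}(\underline{01}1)$ arises from a unique parent (delete the last entry), and the computed children labels and multiplicities agree with $\Omega_{\Inv(\underline{01}1)}$, so the succession rule holds. The main obstacle—and the place I expect to spend the most care—is the bookkeeping in the $A_{\geq}$ case: the inserted value $h$ becomes the new last entry $e_{n+1}$, it gets deactivated because $n$ is now an ascent, and simultaneously $n+1$ is activated, so one must count active sites of $eh$ relative to the new threshold $e_{n+1}=h$ rather than the old $e_n$, and the picture (analogous to Figure~\ref{fig:succession}) must be drawn correctly to see that exactly $i-1$ surviving sites fall below $h$ and $a-i$ above, with the net effect producing $(a+1-i,b+i-1)$. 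The $B_{<}$ case is easier since no deactivation occurs. Once these two counts are verified the proposition follows immediately.
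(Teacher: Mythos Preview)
Your approach is correct and essentially identical to the paper's: after your self-correction you are using exactly the paper's labeling $(a,b)=(|A_{>}(e)|,|B_{\le}(e)|)$ with $A_{>}(e)=\{h\text{ active}:h>e_n\}$ and $B_{\le}(e)=\{h\text{ active}:h\le e_n\}$, and your case analysis of the two insertion types yields the same labels $(a+b+1-i,i)$ and $(a+1-i,b+i-1)$ that the paper obtains. The only cosmetic difference is that the paper introduces the clean notation $A_{>}$, $B_{\le}$ from the start (its Equation for these sets is the analogue of \eqref{eq:def_sets_sites} with the inequalities flipped), avoiding the false start and the confusing reuse of the symbols $A_{\ge}$, $B_{<}$ for sets that are actually defined by strict $>$ and nonstrict $\le$.
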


\begin{proof} We construct a generating tree by insertions on the right. To each $e\in\bI_{n}\left(\underline{01}1\right)$, we assign the label $(a,b) = \left(\left|A_{>}(e)\right|,\left|B_{\leq}(e)\right|\right)$, where
\begin{align}\label{eq:def_sets_sites_2}
\begin{split}
A_{>}(e) &= \left\{h:h\textnormal{ is an active site of }e\textnormal{ and }h>e_{n}\right\}, \\
B_{\leq}(e) &= \left\{h:h\textnormal{ is an active site of }e\textnormal{ and }h\leq e_{n}\right\},
\end{split}
\end{align}
with the convention $e_0=0$. The root, which is the empty inversion sequence, has label $(0,1)$.

Suppose now that $e\in\bI_{n}\left(\underline{01}1\right)$ has label $(a,b)$, and that we grow $e$ by inserting $h$ on the right, obtaining $eh\in\bI_{n+1}\left(\underline{01}1\right)$. The chosen active site $h$ must be either in $A_{>}(e)$ or in $B_{\leq}(e)$.

If $h$ is the $i$th smallest element in $B_{\leq}(e)$, then Lemma~\ref{lem:01-0_and_01-1} implies that $eh$ has label $(a+b+1-i,i)$,
considering the new active site $n+1$ of $eh$. This case is illustrated in Figure~\ref{fig:succession_4}(top). As $i$ ranges from $1$ to $b$, the resulting inversion sequences $eh\in\bI_{n+1}\left(\underline{01}1\right)$ have labels
\[
(a+b,1),(a+b-1,2),\ldots,(a+1,b).
\]

\begin{figure}[htb]
\begin{center}
\includegraphics[scale=0.52]{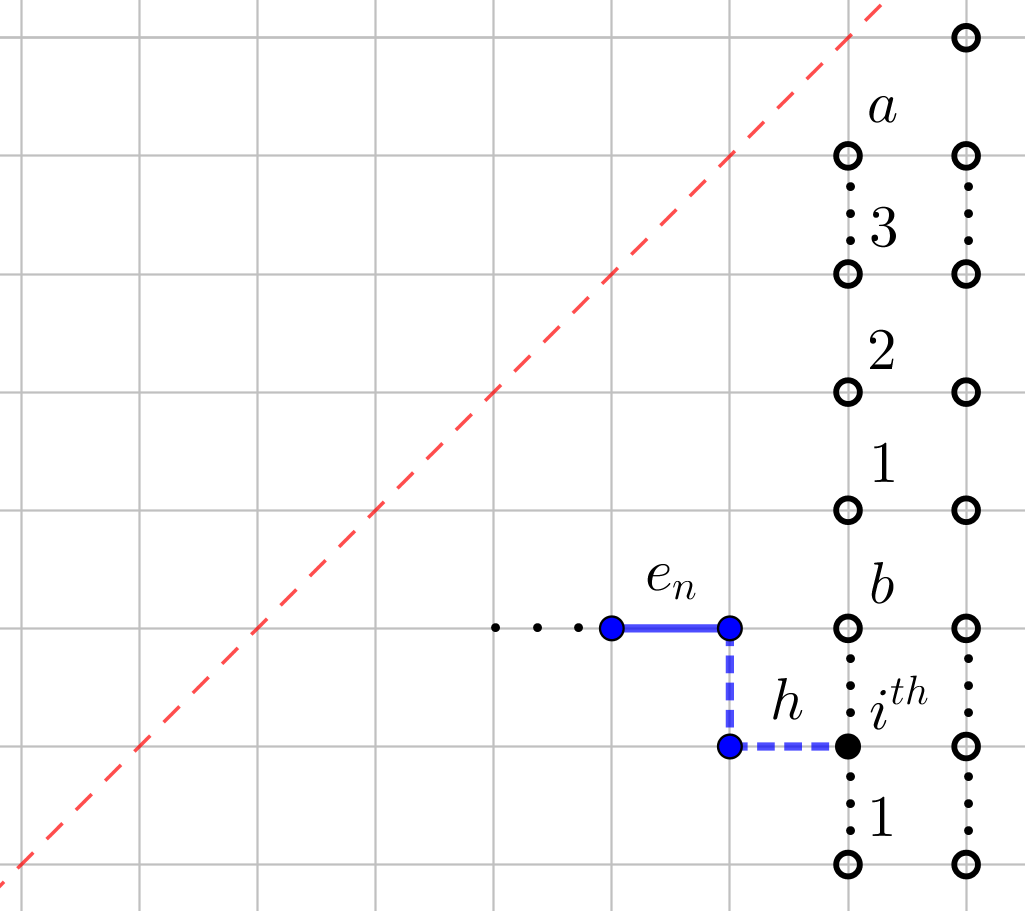}
\hspace*{1.5cm}
\includegraphics[scale=0.52]{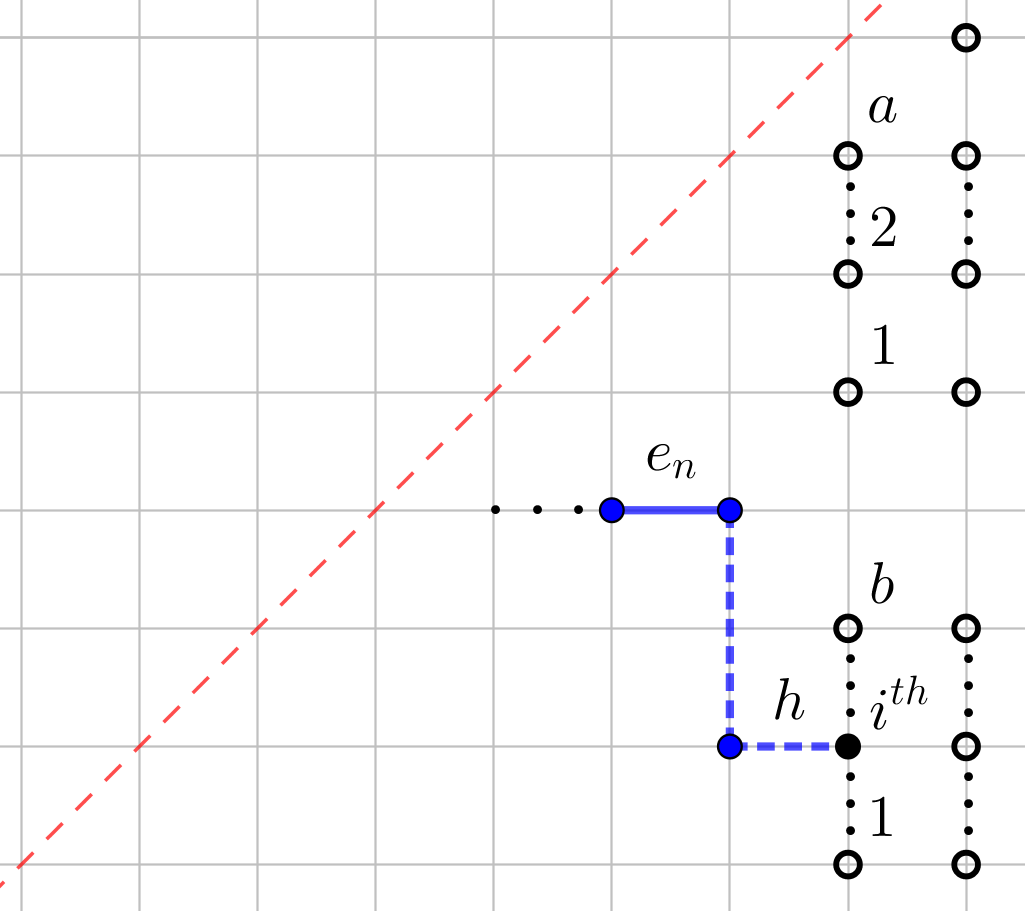}\bigskip

\includegraphics[scale=0.52]{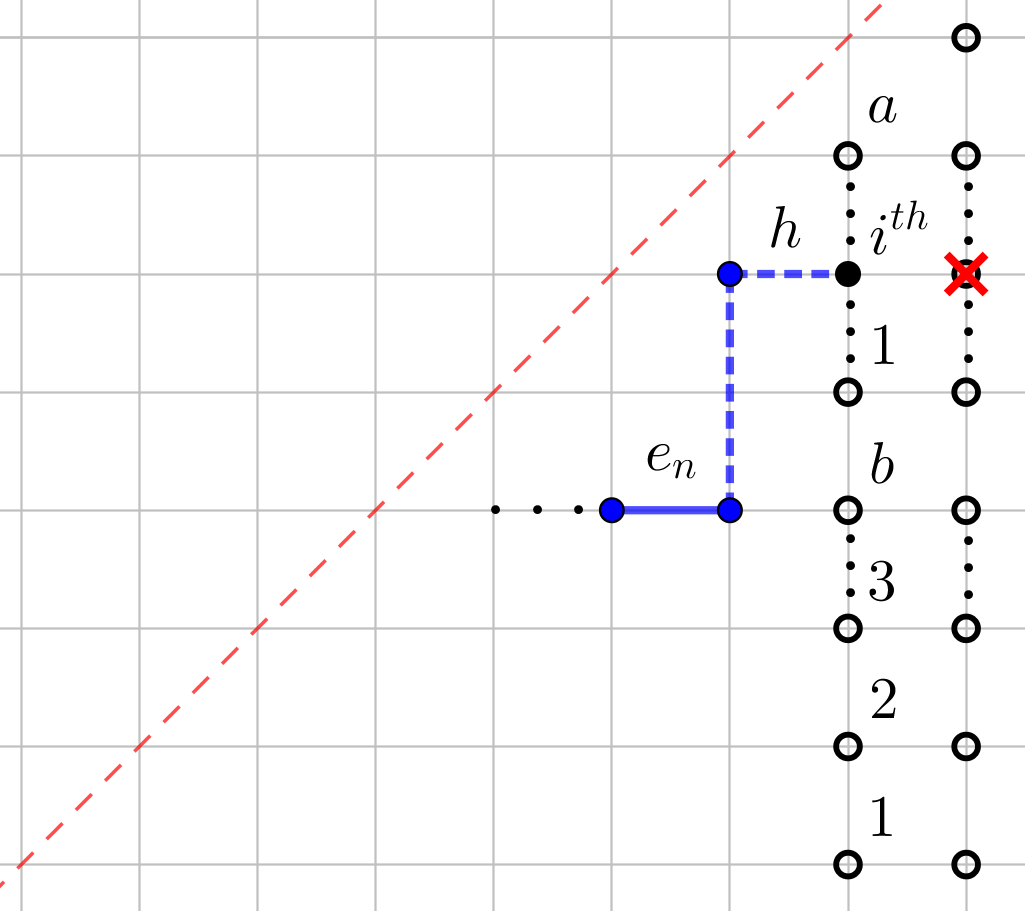}
\hspace*{1.5cm}
\includegraphics[scale=0.52]{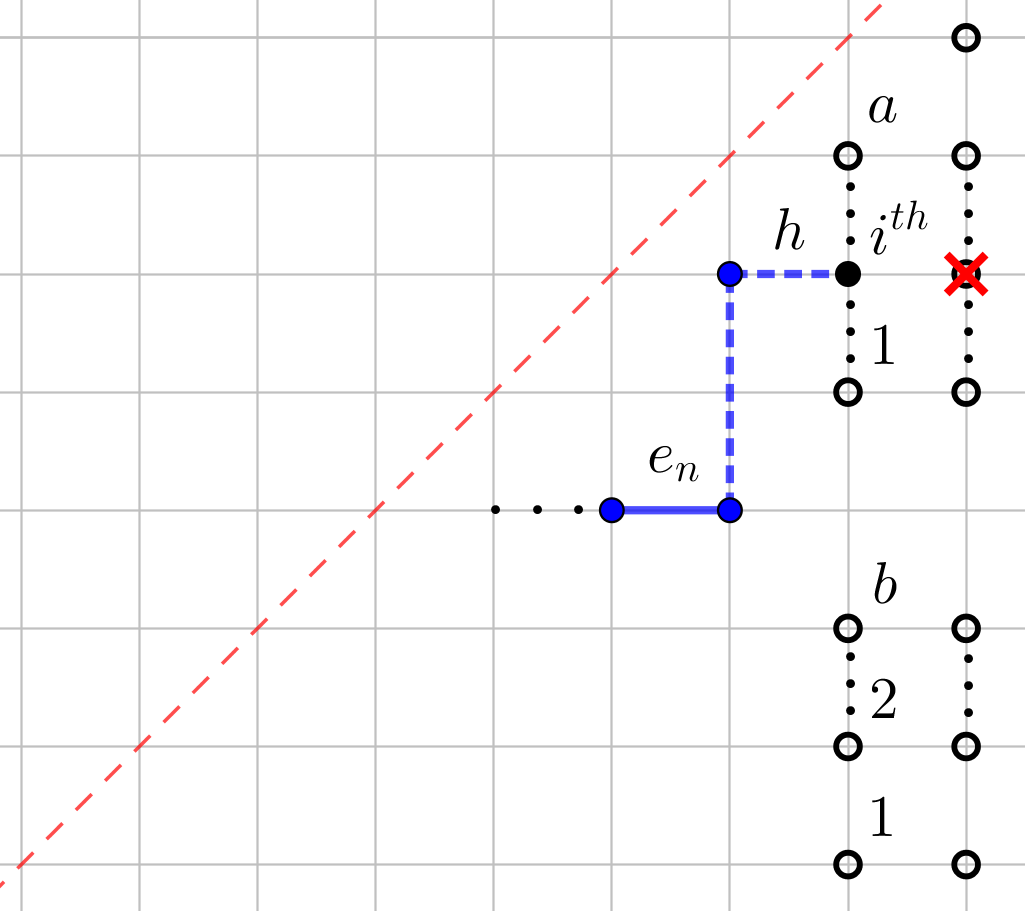}\par
\end{center}
\caption[Growth of $e\in\bI_{n}(\protect\underline{01}1)$]{Growth of $e\in\bI_{n}(\protect\underline{01}1)$ when the $i$th smallest active site in $B_{\leq}(e)$ (top) or $A_{>}(e)$ (bottom) is chosen. The site $e_{n}$ may be active (left) or not (right).}\label{fig:succession_4}
\end{figure}

If $h$ is the $i$th smallest element in $A_{>}(e)$, then $n$ is an ascent of $eh$, so Lemma~\ref{lem:01-0_and_01-1} implies that $h$ is not an active site of $eh$. Considering the new active site $n+1$, the inversion sequence $eh$ has label $(a+1-i,b-1+i)$, see Figure~\ref{fig:succession_4}(bottom).
As $i$ ranges from $1$ to $a$, the resulting inversion sequences in $\bI_{n+1}\left(\underline{01}1\right)$ have labels
\[
(a,b),(a-1,b+1),\ldots,(1,b+a-1).\qedhere
\]
\end{proof}

\begin{prop}\label{prop:01-0_and_01-1_2} The class $\Inv(\underline{01}0)$ has a generating tree described by the succession rule
\[
  \Omega_{\Inv\left(\underline{01}0\right)} = \begin{cases}
      (0,1), & \\
      (a,b)\rightsquigarrow & \hspace{-3mm}(a,b),(a-1,b+1),\ldots,(1,b+a-1), \\
        & \hspace{-3mm}(a+b,1),(a+b-1,2),\ldots,(a+1,b).
      \end{cases}
\]
\end{prop}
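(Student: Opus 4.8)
The plan is to mirror the proof of Proposition~\ref{prop:01-0_and_01-1_1}, constructing a generating tree for $\Inv(\underline{01}0)$ by insertions on the right using the same labeling scheme, and verifying that the resulting succession rule is exactly $\Omega_{\Inv(\underline{01}1)}$. Concretely, to each $e\in\bI_n(\underline{01}0)$ I assign the label $(a,b)=(|A_{>}(e)|,|B_{\leq}(e)|)$, with $A_{>}$ and $B_{\leq}$ defined as in Equation~\eqref{eq:def_sets_sites_2} but now relative to the pattern $\underline{01}0$; by the second part of Lemma~\ref{lem:01-0_and_01-1}, the active sites of $e\in\bI_n(\underline{01}0)$ are $\{0,1,\dots,n\}\setminus\{e_i:i\in\Asc(e)\}$, and in particular $e_n$ is always active, so $e_n\in B_{\leq}(e)$ and hence $b\ge1$ for every nonempty $e$. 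The root (the empty inversion sequence) gets label $(0,1)$, matching the succession rule.

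Next I analyze the growth of $e\in\bI_n(\underline{01}0)$ with label $(a,b)$ under insertion of an active site $h$, splitting into the cases $h\in A_{>}(e)$ and $h\in B_{\leq}(e)$. If $h$ is the $i$th smallest element of $A_{>}(e)$, then $n$ becomes an ascent of $eh$ (since $h>e_n$), so by Lemma~\ref{lem:01-0_and_01-1} the site $e_n$ remains active in $eh$ (it equals $e_i$ for $i=n$, but $n\notin\Asc(e)$ in general — more carefully, deactivation only removes values of the form $e_i$ for $i\in\Asc(eh)$, and the only new ascent is $n$, which deactivates the value $e_n$; wait, $e_n$ is the new $n$th entry's left neighbor's... ). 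Let me restate: in $eh$ the new ascent at position $n$ deactivates the value $e_n$, not $h$, for the pattern $\underline{01}0$, in contrast to $\underline{01}1$ where it deactivates $h=e_{n+1}$. This is the crucial structural difference between the two patterns, and the point of the figure Figure~\ref{fig:succession_4} is precisely that the two cases of deactivation still produce identical label multisets. Counting the active sites of $eh$ above and at/below its last entry $h$, and including the new active site $n+1$, I expect to obtain label $(a+1-i, b-1+i)$ for $i=1,\dots,a$, giving $(a,b),(a-1,b+1),\ldots,(1,b+a-1)$. If instead $h$ is the $i$th smallest element of $B_{\leq}(e)$, then $n$ is not an ascent of $eh$ (since $h\le e_n$), so all active sites of $e$ remain active in $eh$, and together with $n+1$ I expect label $(a+b+1-i,i)$ for $i=1,\dots,b$, yielding $(a+b,1),(a+b-1,2),\ldots,(a+1,b)$. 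These two lists together are exactly the right-hand side of $\Omega_{\Inv(\underline{01}0)}$.

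The main obstacle — and the step that requires genuine care rather than routine bookkeeping — is the deactivation analysis when $h\in A_{>}(e)$. For $\underline{01}1$ the ascent created at position $n$ removes the just-inserted value $h$ from the active sites, so the count drops in a clean way; for $\underline{01}0$ the same ascent instead removes the value $e_n$, which may or may not have been active in $e$ and sits at a different place in the ordering of sites relative to $h$. I need to check that whether $e_n$ was active in $e$ (the two columns of Figure~\ref{fig:succession_4}) does not affect the label of $eh$: in the case $e_n$ was active, it is removed and $h$ stays, and in the case $e_n$ was already inactive, nothing is removed and $h$ stays, but the two scenarios are distinguished by whether $A_{>}(e)$, $B_{\leq}(e)$ already accounted for $e_n$ — one must verify the arithmetic balances out so that $eh$ has exactly $a+1-i$ active sites strictly above $h$ and $b-1+i$ active sites at or below $h$ in both scenarios. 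Once this case-check is done and shown to agree with the $\underline{01}1$ computation, the proposition follows immediately, and combined with Proposition~\ref{prop:01-0_and_01-1_1} it yields $|\bI_n(\underline{01}0)|=|\bI_n(\underline{01}1)|$ for all $n$, i.e.\ Theorem~\ref{EquivVinc}(i).
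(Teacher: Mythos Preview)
Your approach is the same as the paper's, and the first two paragraphs are correct and match the paper's argument almost verbatim: same labeling $(a,b)=(|A_{>}(e)|,|B_{\leq}(e)|)$, same case split according to whether $h\in B_{\leq}(e)$ or $h\in A_{>}(e)$, and the same resulting labels.

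However, the ``main obstacle'' you raise in the third paragraph is illusory, and you already have in hand the fact that dissolves it. You correctly observed (via the last sentence of Lemma~\ref{lem:01-0_and_01-1}) that for $e\in\bI_n(\underline{01}0)$ the value $e_n$ is \emph{always} an active site. So there is no case ``$e_n$ was already inactive'' to worry about; the two-column dichotomy of Figure~\ref{fig:succession_4} pertains to $\underline{01}1$, not to $\underline{01}0$, and the paper accordingly draws a single picture (Figure~\ref{fig:succession_6}) for this step. With $e_n$ guaranteed active and belonging to $B_{\leq}(e)$, the count is clean: when $h$ is the $i$th smallest element of $A_{>}(e)$, the active sites of $eh$ are those of $e$ with $e_n$ removed and $n+1$ added, so there are $(a-i)+1$ active sites strictly above $h$ and $i+(b-1)$ active sites at or below $h$, giving label $(a+1-i,\,b-1+i)$ as claimed. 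Once you use the fact you already stated, the verification is routine and no balancing of scenarios is needed.
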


\begin{proof} We assign to each $e\in\bI_{n}\left(\underline{01}0\right)$ the label $(a,b) = \left(\left|A_{>}(e)\right|,\left|B_{\leq}(e)\right|\right)$, where $A_{>}(e)$ and $B_{\leq}(e)$ are as in Equation~\eqref{eq:def_sets_sites_2}.
As in the proof of Proposition~\ref{prop:01-0_and_01-1_1}, the root  has label $(0,1)$.
Given $e\in\bI_{n}\left(\underline{01}0\right)$ with label $(a,b)$, we grow $e$ by inserting an entry $h$ on the right so that $eh\in\bI_{n+1}\left(\underline{01}0\right)$.

If $h\in B_{\leq}(e)$, then all the active sites of $e$ are also active sites of $eh$ by Lemma~\ref{lem:01-0_and_01-1}, and so the resulting inversion sequences $eh$ for such $h$ have labels
\[
(a+b,1),(a+b-1,2),\ldots,(a+1,b).
\]
This case corresponds also to Figure~\ref{fig:succession_4}(top left), since $e_{n}$ is an active site of $e$ by Lemma~\ref{lem:01-0_and_01-1}.

The other possibility is that $h\in A_{>}(e)$. Suppose that $h$ is the $i$th smallest element in $A_{>}(e)$.
Then $n$ is an ascent of $eh$, and Lemma~\ref{lem:01-0_and_01-1} implies that $e_{n}$ is not an active site of $eh$, but $e_n$ was an active site of $e$. In this case, $eh$ has $i+(b-1)$ active sites $h'$ such that $h'\leq h$, namely the $i$ sites such that $e_{n}< h'\leq h$, and the $b-1$ sites such that $h'< e_{n}$. In addition, $eh$ has $(a-i)+1$ active sites $h'$ such that $h'>h$, once we include the site $n+1$. Hence, $eh$ has label $(a+1-i,b-1+i)$, see Figure~\ref{fig:succession_6}.
As $i$ ranges from $1$ to $b$, the resulting inversion sequences in $eh\in \bI_{n+1}\left(\underline{01}0\right)$ have labels
\[
(a,b),(a-1,b+1),\ldots,(1,b+a-1).\qedhere
\]
\end{proof}

\begin{figure}[htp]
\begin{center}
	\includegraphics[scale=0.52]{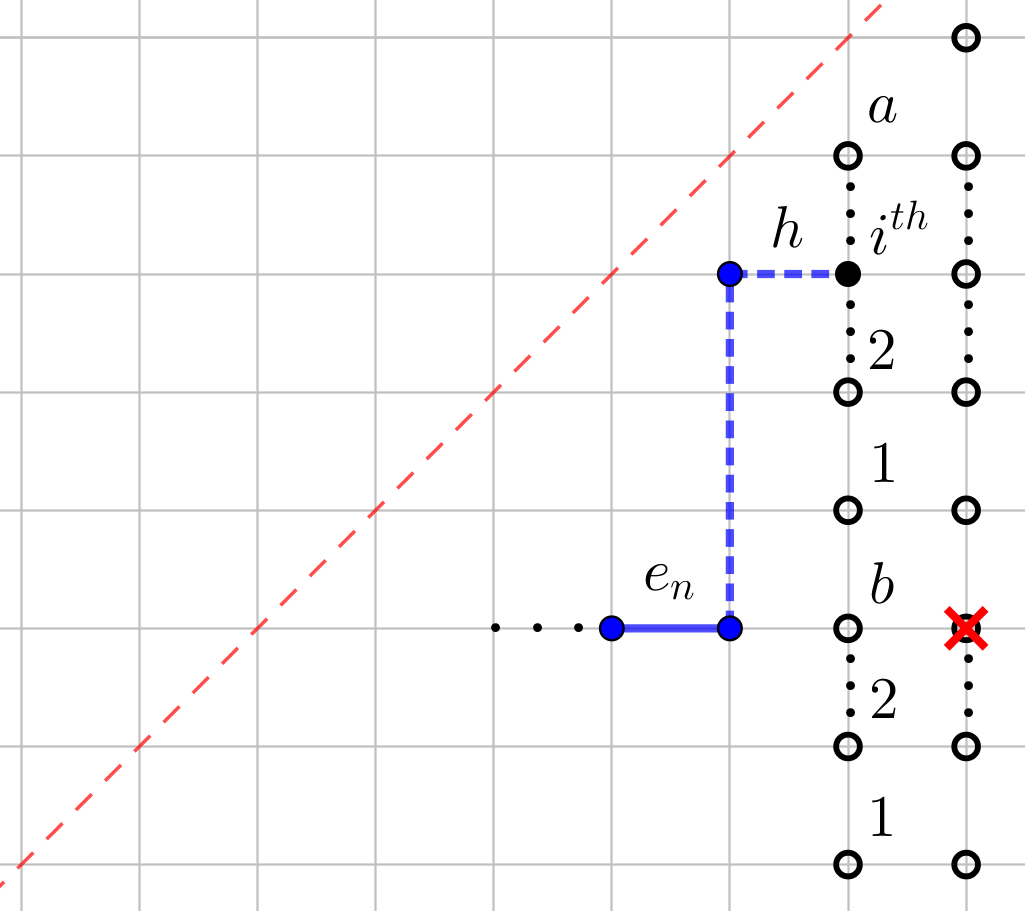}
\end{center}
\caption[Growth of $e\in\bI_{n}(\protect\underline{01}0)$]{Growth of $e\in\bI_{n}(\protect\underline{01}0)$ when the $i$th smallest active site in $A_{>}(e)$ is chosen.}\label{fig:succession_6}
\end{figure}

The generating trees for $\Inv\left(\underline{01}1\right)$ and $\Inv\left(\underline{01}0\right)$ described in Propositions~\ref{prop:01-0_and_01-1_1} and~\ref{prop:01-0_and_01-1_2} are isomorphic, and so the next result follows.

\begin{cor}\label{cor:01_0_and_01_1} The patterns $\underline{01}0$ and $\underline{01}1$ are Wilf equivalent.
\end{cor}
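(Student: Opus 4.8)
The plan is to deduce the corollary directly from Propositions~\ref{prop:01-0_and_01-1_1} and~\ref{prop:01-0_and_01-1_2}. Those two results exhibit generating trees for $\Inv(\underline{01}1)$ and for $\Inv(\underline{01}0)$ in which the vertices at level $n$ are exactly the inversion sequences of length $n$ in the respective class, and they show that both trees are governed by one and the same succession rule: in each case the root has label $(0,1)$, and in each case a vertex with label $(a,b)$ produces children with labels $(a,b),(a-1,b+1),\dots,(1,b+a-1),(a+b,1),(a+b-1,2),\dots,(a+1,b)$. So the first step is simply to record that $\Omega_{\Inv(\underline{01}1)}=\Omega_{\Inv(\underline{01}0)}$ as succession rules.

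The second step is the standard observation that a succession rule determines its generating tree up to isomorphism, and in particular determines the number of vertices at each level. Hence the two trees have equally many vertices at level $n$ for every $n$. By construction this common count is $|\bI_n(\underline{01}1)|$ on one side and $|\bI_n(\underline{01}0)|$ on the other, so $|\bI_n(\underline{01}0)|=|\bI_n(\underline{01}1)|$ for all $n$, which is precisely the Wilf equivalence $\underline{01}0\sim\underline{01}1$. If one wants to make the bijection explicit, one can proceed by induction on $n$, matching the $i$th child (in the prescribed order) of a label-$(a,b)$ vertex in the first tree with the $i$th child of the corresponding vertex in the second; but this adds nothing beyond the equality of succession rules already established.

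I do not expect any real obstacle at this point: all of the genuine combinatorial work — tracking, via Lemma~\ref{lem:01-0_and_01-1}, how the active sites and hence the label $(|A_{>}(e)|,|B_{\le}(e)|)$ evolve when an entry is appended on the right, and checking that $e_n$ itself is an active site in the $\underline{01}0$ case — has already been carried out inside Propositions~\ref{prop:01-0_and_01-1_1} and~\ref{prop:01-0_and_01-1_2}. What remains is the purely formal comparison of the two displayed rules, which visibly agree. The only mild point worth stating carefully is why the label $(a,b)$ is a valid statistic for building a generating tree (i.e.\ that the children listed are genuinely all of the children, with no repetitions and no omissions), but again this is exactly what the two propositions assert.
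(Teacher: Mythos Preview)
Your proposal is correct and matches the paper's own argument essentially verbatim: the corollary is stated as an immediate consequence of the fact that the succession rules in Propositions~\ref{prop:01-0_and_01-1_1} and~\ref{prop:01-0_and_01-1_2} coincide, so the two generating trees are isomorphic and have equally many vertices at each level. The paper does not even include a separate proof environment for this corollary, so your write-up is, if anything, slightly more detailed than the original.
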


It is easy to check that these two patterns are not strongly Wilf equivalent: the are 52 inversion sequences of length 5 containing exactly one occurrence of $\underline{01}0$, but only 50 containing exactly one occurrence of $\underline{01}1$.

\begin{proof}[Proof of Theorem~\ref{EquivVinc}] By Propositions~\ref{prop:2_01_and_2_10} and~\ref{prop:1_01_and_1_10}, and Corollaries~\ref{cor:10_0_and_10_1}, and~\ref{cor:01_0_and_01_1}, we know that the equivalences (i)--(iv) hold. We provided computational evidence showing that $\underline{01}0 \stackrel{s}{\not\sim}\underline{01}1$, $\underline{10}0 \stackrel{s}{\not\sim}\underline{10}1$, $2\underline{01} \stackrel{s}{\not\sim}2\underline{10}$, and $1\underline{01} \stackrel{ss}{\not\sim}1\underline{10}$, and a brute force computation for small values of~$n$ shows that no two other hybrid vincular patterns are Wilf equivalent.
\end{proof}

\bibliographystyle{amsplain}
\addcontentsline{toc}{chapter}{References}
\bibliography{dartmouth_bib}

\end{document}